\documentclass[12pt,reqno]{article}

\usepackage[usenames]{color}
\usepackage{amssymb}
\usepackage{graphicx}
\usepackage{amscd}

\usepackage{amsthm}
\newtheorem{theorem}{Theorem}
\newtheorem{lemma}[theorem]{Lemma}
\newtheorem{proposition}[theorem]{Proposition}
\newtheorem{corollary}[theorem]{Corollary}

\theoremstyle{definition}

\newtheorem{example}[theorem]{Example}

\usepackage[colorlinks=true,
linkcolor=webgreen, filecolor=webbrown,
citecolor=webgreen]{hyperref}

\definecolor{webgreen}{rgb}{0,.5,0}
\definecolor{webbrown}{rgb}{.6,0,0}

\usepackage{color}

\usepackage{float}

\usepackage{graphics,amsmath,amssymb}
\usepackage{amsfonts}
\usepackage{latexsym}
\usepackage{epsf}

\setlength{\textwidth}{6.5in} \setlength{\oddsidemargin}{.1in}
\setlength{\evensidemargin}{.1in} \setlength{\topmargin}{-.5in}
\setlength{\textheight}{8.9in}

\newcommand{\seqnum}[1]{\href{http://www.research.att.com/cgi-bin/access.cgi/as/~njas/sequences/eisA.cgi?Anum=#1}{\underline{#1}}}

\begin{document}

\begin{center}
\vskip 1cm{\LARGE\bf A Note on Riordan Arrays with Catalan Halves} \vskip 1cm \large
Paul Barry\\
School of Science\\
Waterford Institute of Technology\\
Ireland\\
\href{mailto:pbarry@wit.ie}{\tt pbarry@wit.ie}
\end{center}
\vskip .2 in

\begin{abstract} For a lower triangular matrix $(t_{n,k})$ we call the matrices with respective entries $(t_{2n-k,n})$ and $(t_{2n,n+k})$ the vertical and the horizontal halves. In this note, we discuss Riordan arrays whose halves are closely related to the Catalan matrices. \end{abstract}

\section{Preliminaries on Riordan arrays}
We recall some facts about Riordan arrays in this introductory section. Readers familiar with Riordan arrays may wish to move on to the next section. 

A Riordan array \cite{Book, SGWW} is defined by a pair of power series
$$g(x)=g_0 + g_1 x + g_2 x^2 + \cdots=\sum_{n=0}^{\infty} g_n x^n,$$ and
$$f(x)=f_1 x + f_2 x^2+ f_3 x^3 + \cdots = \sum_{n=1}^{\infty} f_n x^n.$$
We require that $g_0 \ne 0$ (and hence $g(x)$ is invertible, with inverse $\frac{1}{g(x)}$), while we also demand that
$f_0=0$ and $f_1 \ne 0$ (hence $f(x)$ has a compositional inverse $\bar{f}(x)=\text{Rev}(f)(x)$ defined by $f(\bar{f}(x))=x$).
The set of such pairs $(g(x), f(x))$ forms a group (called the Riordan group \cite{SGWW}) with multiplication
$$(g(x), f(x)) \cdot (u(x), v(x))=(g(x)u(f(x)), v(f(x)),$$ and with inverses given by
$$(g(x), f(x))^{-1}=\left(\frac{1}{g(\bar{f}(x))}, \bar{f}(x)\right).$$
The coefficients of the power series may be drawn from any ring (for example, the integers $\mathbb{Z}$) where these operations make sense. To each such ring there exists a corresponding Riordan group.

There is a matrix representation of this group, where to the element $(g(x), f(x))$ we associate the matrix
$\left(a_{n,k}\right)_{0 \le n,k \le \infty}$ with general element
$$t_{n,k}=[x^n] g(x)f(x)^k.$$
Here, $[x^n]$ is the functional that extracts the coefficient of $x^n$ in a power series \cite{MC}. In this representation, the group law corresponds to ordinary matrix multiplication, and the inverse of $(g(x), f(x))$ is represented by the inverse of $\left(t_{n,k}\right)_{0 \le n,k \le \infty}$.

The Fundamental Theorem of Riordan arrays is the rule
$$(g(x), f(x))\cdot h(x)=g(x)h(f(x)),$$ detailing how an array $(g(x), f(x))$ can act on a power series. This corresponds to the matrix $(t_{n,k})$ multiplying the vector $(h_0, h_1, h_2,\ldots)^T$.

\begin{example} Pascal's triangle, also known as the binomial matrix, is defined by the Riordan group element
$$B=\left(\frac{1}{1-x}, \frac{x}{1-x}\right).$$ This means that we have
$$\binom{n}{k}=[x^n] \frac{1}{1-x} \left(\frac{x}{1-x}\right)^k.$$
To see that this is so, we need to be familiar with the rules of operation of the functional $[x^n]$ \cite{MC}.
We have
\begin{align*}
[x^n] \frac{1}{1-x} \left(\frac{x}{1-x}\right)^k&=[x^n] \frac{x^k}{(1-x)^{k+1}}\\
&= [x^{n-k}] (1-x)^{-(k+1)}\\
&= [x^{n-k}] \sum_{j=0}^{\infty} \binom{-(k+1)}{j}(-1)^j x^j\\
&= [x^{n-k}] \sum_{j=0}^{\infty} \binom{k+1+j-1}{j}x^j\\
&= [x^{n-k}] \sum_{j=0}^{\infty} \binom{k+j}{j} x^j \\
&= \binom{k+n-k}{n-k}=\binom{n}{n-k}=\binom{n}{k}.\end{align*}
\end{example}

The binomial matrix is an element of the Bell subgroup of the Riordan group, consisting of arrays of the form $(g(x), xg(x))$. It is also an element of the hitting time subgroup, which consists of arrays of the form
$\left(\frac{x f'(x)}{f(x)}, f(x)\right)$. Arrays of the form $(1, f(x))$ belong to the associated or Lagrange subgroup of the Riordan group.

Note that all the arrays in this note are lower triangular matrices of infinite extent. We show appropriate truncations.

Many examples of sequences and  Riordan arrays are documented in the On-Line Encyclopedia of Integer Sequences (OEIS) \cite{SL1, SL2}. Sequences are frequently referred to by their
OEIS number. For instance, the binomial matrix $\mathbf{B}=\left(\frac{1}{1-x}, \frac{x}{1-x}\right)$ (``Pascal's triangle'') is \seqnum{A007318}. In the sequel we will not distinguish between an array pair $(g(x), f(x))$ and its matrix representation. The Hankel transform of a sequence $a_n$ is the sequence of determinants $h_n=|a_{i+j}|_{i \le i,j \le n}$.

The Catalan numbers $C_n=\frac{1}{n+1} \binom{2n}{n}$ \seqnum{A000108} have generating function
$$c(x)=\frac{1-\sqrt{1-4x}}{2x}.$$
We note that $$\text{Rev}(xc(x)) = x(1-x).$$ The Catalan numbers $C_n$ are the unique numbers such that the sequences $C_n$ and $C_{n+1}$ both have their Hankel tranforms given by $h_n=1$ for all $n \ge 0$. 

There are a number of Riordan arrays that are closely related to the Catalan numbers. The principal ones are $(1, xc(x))$ \seqnum{A106566}, $(1, c(x)-1)=(1,xc(x)^2)$ \seqnum{A128899}, $(c(x), xc(x))$ \seqnum{A033184} and $(c(x)^2, xc(x)^2)$ \seqnum{A039598}. These, and their reverse triangles, are collectively known as Catalan matrices. For instance, the matrix $(1, xc(x))$ begins
$$\left(
\begin{array}{ccccccc}
 1 & 0 & 0 & 0 & 0 & 0 & 0 \\
 0 & 1 & 0 & 0 & 0 & 0 & 0 \\
 0 & 1 & 1 & 0 & 0 & 0 & 0 \\
 0 & 2 & 2 & 1 & 0 & 0 & 0 \\
 0 & 5 & 5 & 3 & 1 & 0 & 0 \\
 0 & 14 & 14 & 9 & 4 & 1 & 0 \\
 0 & 42 & 42 & 28 & 14 & 5 & 1 \\
\end{array}
\right).$$ 
This is \seqnum{A106566}.

\section{The vertical and horizontal halves of a Riordan array}
Given a Riordan array $M=(g(x), f(x))$ with matrix representation $\left(t_{n,k}\right)$ we shall denote by its \emph{vertical half} the matrix $V$ with general $(n,k)$-th term $t_{2n-k,n}$.
We have the following result \cite{Half, Yang3, Yang2, Yang1}.
\begin{lemma}
Given a Riordan array $M=(g(x), f(x))$, its vertical half $V$ is the Riordan array
$$V=\left(\frac{\phi(x)\phi'(x) g(\phi(x))}{f(\phi(x))}, \phi(x)\right)=\left(\frac{x \phi'(x) g(\phi(x))}{\phi(x)}, \phi(x)\right),$$ where
$$\phi(x)=\text{Rev}\left(\frac{x^2}{f(x)}\right).$$
\end{lemma}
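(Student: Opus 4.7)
The plan is to convert the defining coefficient extraction into a formal residue and then pass to a new variable in which the awkward exponent $2n-k$ separates cleanly into an $n$-th and a $k$-th power. Setting $u(x)=x^2/f(x)$, one checks immediately that $u(0)=0$ and $u'(0)=1/f_1\neq 0$, so the compositional inverse $\phi(x)=\text{Rev}(u)(x)$ exists as a formal power series and satisfies the identity $\phi(x)^2 = x f(\phi(x))$, which we will use twice: once during the substitution and once to reconcile the two stated forms of the first coordinate.

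First I would rewrite
$$V_{n,k}=[x^{2n-k}]\,g(x)f(x)^n=[x^{-1}]\,g(x)\,x^{k-1}\,u(x)^{-n}.$$
Next I would apply the change-of-variables rule for the formal residue functional, namely $[x^{-1}]G(x)=[w^{-1}]\,G(\phi(w))\,\phi'(w)$, which is valid because $\phi(0)=0$ and $\phi'(0)\neq 0$. Since $u(\phi(w))=w$, this collapses the right-hand side to
$$[w^{-1}]\,g(\phi(w))\,\phi(w)^{k-1}\,\phi'(w)\,w^{-n}=[w^n]\,\frac{w\,\phi'(w)\,g(\phi(w))}{\phi(w)}\cdot\phi(w)^{k},$$
which is exactly the $(n,k)$ entry of the Riordan array $\bigl(\frac{x\phi'(x)g(\phi(x))}{\phi(x)},\,\phi(x)\bigr)$. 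For the alternative first coordinate, rearranging $\phi(x)^2=xf(\phi(x))$ gives $\phi(x)/f(\phi(x))=x/\phi(x)$, and multiplying through by $\phi'(x)g(\phi(x))$ trades the factor $x/\phi(x)$ for $\phi(x)/f(\phi(x))$.

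The one step that I expect to need a careful word of justification is the invocation of the change-of-variables formula, because the integrand $g(x)x^{k-1}u(x)^{-n}$ is a genuine Laurent series (with a pole at the origin of order up to $2n-k+1$) rather than a formal power series. This is standard rather than hard: the residue functional $[x^{-1}]$ extends naturally to $\mathbb{Z}[[x]][x^{-1}]$, and the change-of-variables rule remains valid on this ring for any substitution given by a formal inverse pair rooted at the origin, which is our situation. Beyond this verification the remainder is routine bookkeeping, and I would not anticipate any conceptual surprises in the intermediate algebra.
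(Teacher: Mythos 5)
Your argument is correct: the rewriting of $t_{2n-k,n}$ as a formal residue, the change of variables $x=\phi(w)$ using $u(\phi(w))=w$, and the reconciliation of the two first coordinates via $\phi(x)^2=xf(\phi(x))$ all check out, and you rightly flag the extension of the residue substitution rule to Laurent series as the one point needing justification. The paper itself offers no proof of this lemma (it cites the literature), but your computation is exactly the standard Lagrange--B\"urmann argument used there --- applying $[x^n]H(\phi(x))\frac{x\phi'(x)}{\phi(x)}=[x^n]H(x)\bigl(f(x)/x\bigr)^n$ with $H(x)=g(x)x^k$ --- unwound into a self-contained residue calculation, so this is essentially the intended route.
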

\begin{corollary} We have the factorization
$$V=(g(\phi(x)),x) \cdot \left(\frac{x \phi'(x)}{\phi(x)}, \phi(x)\right),$$ where the factor
$\left(\frac{x \phi'(x)}{\phi(x)}, \phi(x)\right)$ is an element of the hitting-time subgroup of the Riordan group.
\end{corollary}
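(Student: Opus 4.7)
The plan is to carry out a direct verification using the Riordan product rule recalled earlier in the excerpt, namely $(g(x), f(x)) \cdot (u(x), v(x)) = (g(x)u(f(x)), v(f(x)))$. Writing the proposed right-hand side as $(G_1, F_1) \cdot (G_2, F_2)$ with $G_1 = g(\phi(x))$, $F_1 = x$, $G_2 = x\phi'(x)/\phi(x)$, and $F_2 = \phi(x)$, the fact that $F_1$ is the identity series makes the composition trivial: $G_2(F_1(x)) = G_2(x)$ and $F_2(F_1(x)) = \phi(x)$. The product therefore equals
\[
\left(g(\phi(x)) \cdot \frac{x\phi'(x)}{\phi(x)},\, \phi(x)\right),
\]
which matches exactly the expression for $V$ furnished by the preceding lemma.

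Before concluding, I would pause to check that the two proposed factors are legitimate elements of the Riordan group. For $(g(\phi(x)), x)$ this reduces to $g(\phi(0)) = g(0) \neq 0$, which holds by the hypothesis on $g$. For $\left(\frac{x\phi'(x)}{\phi(x)}, \phi(x)\right)$ one needs $\phi(0) = 0$ together with $\phi'(0) \neq 0$, so that the quotient $x\phi'(x)/\phi(x)$ extends across the origin to the constant $1$ and $\phi$ is compositionally invertible. Both facts follow from $\phi = \mathrm{Rev}(x^2/f(x))$: if $f(x) = f_1 x + O(x^2)$ with $f_1 \neq 0$, then $x^2/f(x) = x/f_1 + O(x^2)$, whose reversion satisfies $\phi(x) = f_1 x + O(x^2)$.

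Finally, I would observe that the second factor has exactly the shape $\left(xF'(x)/F(x), F(x)\right)$ with $F = \phi$, which is the defining form of elements of the hitting-time subgroup introduced after the Pascal example. The claimed subgroup membership is then immediate. There is essentially no obstacle to overcome here: the corollary is a mechanical restatement of the lemma whose purpose is to expose the factorization structure of $V$, and both the product computation and the two well-definedness checks are routine given the lemma.
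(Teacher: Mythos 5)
Your verification is correct and follows exactly the route the paper intends: the paper leaves this corollary unproved as an immediate consequence of the product rule applied to the lemma's formula $V=\left(\frac{x\phi'(x)g(\phi(x))}{\phi(x)},\phi(x)\right)$, and it carries out the identical one-line splitting-off of $(g(\phi),x)$ explicitly in the proof of the analogous corollary for $H$. Your additional checks that both factors are genuine Riordan group elements (via $\phi(x)=f_1x+O(x^2)$) and that the second factor has the defining hitting-time form are sound and, if anything, slightly more careful than the paper.
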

The \emph{horizontal half} $H$ of the array $M=(g(x), f(x))$ is the array whose matrix representation has general $(n,k)$-th term given by $t_{2n,n+k}$ \cite{Half}.
We then have the following result \cite{Half, Central}.
\begin{lemma}Given a Riordan array $(g(x), f(x))=(g(x), xh(x))$, its horizontal half $H$ is the Riordan array
$$\left(\frac{\phi'}{h(\phi)}, \phi\right)\cdot (g, f)=\left(\frac{x \phi'}{\phi}, \phi\right)\cdot (g, f)$$ where
$$\phi(x)=\text{Rev}\left(\frac{x^2}{f(x)}\right).$$
\end{lemma}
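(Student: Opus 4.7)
The plan is to verify the claimed identity of Riordan arrays by comparing generic entries. By the Riordan group product,
$$\left(\frac{x\phi'(x)}{\phi(x)},\phi(x)\right)\cdot(g(x),f(x))=\left(\frac{x\phi'(x)}{\phi(x)}\,g(\phi(x)),\,f(\phi(x))\right),$$
whose $(n,k)$-entry is $[x^n]\,\frac{x\phi'(x)}{\phi(x)}\,g(\phi(x))\,f(\phi(x))^k$. On the other hand, the $(n,k)$-entry of the horizontal half is $t_{2n,n+k}=[x^{2n}]\,g(x)\,f(x)^{n+k}$. So the statement reduces to showing
$$[x^{2n}]\,g(x)\,f(x)^{n+k}=[x^n]\,\frac{x\phi'(x)}{\phi(x)}\,g(\phi(x))\,f(\phi(x))^k$$
for all $n,k\ge 0$. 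The equivalence of $\frac{\phi'}{h(\phi)}$ with $\frac{x\phi'}{\phi}$ in the statement will drop out of the next step and may be recorded as a side remark.

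The engine of the proof is the functional identity encoded by $\phi=\text{Rev}(x^2/f(x))$, namely
$$f(\phi(u))=\frac{\phi(u)^2}{u},$$
obtained by substituting $x=\phi(u)$ in $u=x^2/f(x)$. Two consequences follow immediately: first, $h(\phi(x))=f(\phi(x))/\phi(x)=\phi(x)/x$, which yields the stated coincidence $\frac{\phi'}{h(\phi)}=\frac{x\phi'}{\phi}$; second, every power of $f\circ\phi$ that appears below can be converted cleanly into powers of $\phi$ and of the variable.

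I would then rewrite the left-hand side as a formal residue
$$[x^{2n}]\,g(x)\,f(x)^{n+k}=\text{Res}_{x=0}\,\frac{g(x)\,f(x)^{n+k}}{x^{2n+1}}\,dx,$$
and perform the change of variables $x=\phi(u)$, $dx=\phi'(u)\,du$. Applying $f(\phi(u))=\phi(u)^2/u$, the powers of $\phi(u)$ in numerator and denominator collapse, and the residue becomes $[u^{n+k-1}]\,g(\phi(u))\,\phi(u)^{2k-1}\,\phi'(u)$. For the right-hand side I would use the same identity $f(\phi(x))^k=\phi(x)^{2k}/x^k$ directly; the integrand simplifies to $x^{1-k}\,\phi'(x)\,\phi(x)^{2k-1}\,g(\phi(x))$, whose coefficient of $x^n$ is the same expression $[x^{n+k-1}]\,g(\phi(x))\,\phi(x)^{2k-1}\,\phi'(x)$. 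Matching the two equal coefficients completes the proof.

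The main obstacle is purely bookkeeping: the residue substitution produces several factors whose exponents have to line up precisely, and one can easily slip on an exponent of $\phi(u)$ or forget the Jacobian $\phi'(u)\,du$. A minor subtlety arises in the corner $n=k=0$, where the shifted coefficient $[u^{-1}]$ looks problematic; this is handled by a direct check that both sides equal $g_0$, or by observing that the intermediate formal-power-series identities are valid throughout the computation once all factors are collected before extracting coefficients.
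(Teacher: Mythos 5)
Your argument is correct. The paper itself does not prove this lemma---it cites it from earlier work (\cite{Half, Central})---so there is no internal proof to compare against; your residue computation supplies a self-contained justification. The two pillars are both sound: the functional equation $f(\phi(u))=\phi(u)^2/u$ follows correctly from $\phi=\text{Rev}(x^2/f(x))$ (and immediately gives $h(\phi)=\phi/x$, hence the equality of the two displayed forms of the left factor), and the change of variables $x=\phi(u)$ in $\text{Res}_{x=0}\,g(x)f(x)^{n+k}x^{-2n-1}\,dx$ is the standard formal-residue substitution, valid since $\phi$ has zero constant term and nonzero linear term. Both sides reduce to $[u^{n+k-1}]\,g(\phi(u))\,\phi(u)^{2k-1}\,\phi'(u)$, and your handling of the $n=k=0$ corner is right: $g(\phi)\phi'/\phi=1/u\cdot g_0+O(1)$ has residue $g_0$, matching $[x^0]g(x)$. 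This is essentially the Lagrange-inversion argument used in the cited sources, so while it is not ``the paper's route'' (the paper has none), it is the expected one and nothing is missing.
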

\begin{corollary}
We have
$$H=\left(\frac{x \phi' g(\phi)}{\phi}, f(\phi)\right)=\left(\frac{\phi \phi' g(\phi)}{f(\phi)}, f(\phi)\right).$$
\end{corollary}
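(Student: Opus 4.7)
The plan is to obtain both equalities directly from the preceding Lemma by (i) carrying out the Riordan-group multiplication explicitly, and (ii) using the defining functional equation for $\phi$.

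First I would start from the Lemma, which asserts
$$H=\left(\frac{x\phi'}{\phi},\phi\right)\cdot (g,f).$$
Applying the Riordan product rule $(g_1,f_1)\cdot(g_2,f_2)=(g_1\,g_2(f_1),\,f_2(f_1))$ with $g_1=x\phi'/\phi$, $f_1=\phi$, $g_2=g$, $f_2=f$, gives the first claimed expression
$$H=\left(\frac{x\phi'(x)\,g(\phi(x))}{\phi(x)},\,f(\phi(x))\right).$$
This step is routine.

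The second equality requires only the identity $\dfrac{x}{\phi(x)}=\dfrac{\phi(x)}{f(\phi(x))}$, and this is where the defining relation $\phi(x)=\text{Rev}\!\left(x^{2}/f(x)\right)$ comes in. Setting $F(x)=x^{2}/f(x)$, the reversion identity $F(\phi(x))=x$ reads
$$\frac{\phi(x)^{2}}{f(\phi(x))}=x,$$
so $\phi(x)^{2}=x\,f(\phi(x))$ and hence $x/\phi(x)=\phi(x)/f(\phi(x))$. Substituting this into the generating function entry obtained in step one produces
$$\frac{x\phi'(x)\,g(\phi(x))}{\phi(x)}=\frac{\phi(x)\phi'(x)\,g(\phi(x))}{f(\phi(x))},$$
which yields the second stated form.

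There is no real obstacle here: the only piece of content is the reversion identity $\phi^{2}=x\,f(\phi)$, and the rest is the mechanical Riordan-multiplication computation. If anything, the bookkeeping step to be careful about is making sure the order of composition is correct in $f_{2}(f_{1})=f(\phi)$, since it is easy to write $\phi(f)$ by mistake; but once the order is fixed, the two displayed formulas match as required.
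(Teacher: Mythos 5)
Your proof is correct and follows essentially the same route as the paper: the first form is precisely the Riordan product $\left(\frac{x\phi'}{\phi},\phi\right)\cdot(g,f)$ computed via the group law (the paper carries out this exact step in the proof of the very next corollary), and the second form follows from the reversion identity $\phi(x)^2 = x\,f(\phi(x))$, which is the same identity the paper uses implicitly throughout. Nothing further is needed.
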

\begin{corollary}
We have
$$H=(g(\phi(x)),x) \cdot \left(\frac{x \phi'(x)}{\phi(x)}, f(\phi(x))\right).$$
\end{corollary}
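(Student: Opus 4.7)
The plan is to derive the stated factorization as a direct consequence of the preceding corollary together with the Riordan multiplication rule. The preceding corollary asserts
$$H=\left(\frac{x\phi'(x)\,g(\phi(x))}{\phi(x)},\,f(\phi(x))\right),$$
so all that remains is to recognize the right-hand side as an ordinary Riordan product of two elements, the first of which has $x$ as its second component.

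First I would invoke the group law $(g_1,f_1)\cdot(g_2,f_2)=(g_1\cdot g_2(f_1),\,f_2(f_1))$ with the choice $(g_1,f_1)=(g(\phi(x)),x)$ and $(g_2,f_2)=\bigl(x\phi'(x)/\phi(x),\,f(\phi(x))\bigr)$. Because $f_1(x)=x$ acts as the identity under substitution, we have $g_2(f_1(x))=x\phi'(x)/\phi(x)$ and $f_2(f_1(x))=f(\phi(x))$, so the product collapses to
$$\left(g(\phi(x))\cdot\frac{x\phi'(x)}{\phi(x)},\,f(\phi(x))\right),$$
which is exactly the expression for $H$ given by the preceding corollary.

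The only administrative check is that the proposed factors are genuine Riordan elements. Since $f(x)=xh(x)$ with $h(0)\neq 0$, the series $x^2/f(x)=x/h(x)$ has nonzero linear coefficient and hence admits a compositional inverse $\phi$ with $\phi(0)=0$ and $\phi'(0)\neq 0$; thus $g(\phi(0))=g_0\neq 0$, making $(g(\phi(x)),x)$ a valid Riordan array, and similarly $x\phi'(x)/\phi(x)$ has constant term $\phi'(0)\neq 0$ while $f(\phi(x))$ has $f_1\phi'(0)$ as its linear coefficient, so the second factor is valid as well. I do not foresee a real obstacle here; the statement is essentially a cosmetic repackaging of the preceding corollary into a form that isolates the hitting-time style factor $(x\phi'(x)/\phi(x),f(\phi(x)))$ from the ``shift by $g(\phi)$'' factor $(g(\phi(x)),x)$, paralleling the factorization already recorded for the vertical half $V$.
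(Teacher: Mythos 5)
Your proposal is correct and matches the paper's own argument: the paper likewise obtains $H=\bigl(\tfrac{x\phi' g(\phi)}{\phi}, f(\phi)\bigr)$ (the preceding corollary) and then regroups it as $(g(\phi),x)\cdot\bigl(\tfrac{x\phi'}{\phi}, f(\phi)\bigr)$ using the same multiplication rule with $f_1(x)=x$ acting as the identity under substitution. Your added check that both factors are genuine Riordan elements is a harmless (and slightly more careful) supplement to what the paper records.
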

\begin{proof}
We have
\begin{align*} H&=\left(\frac{x \phi'}{\phi}, \phi\right)\cdot (g, f)\\
&=\left(\frac{x\phi' g(\phi)}{\phi}, f(\phi)\right)\\
&=(g(\phi), x)\cdot \left(\frac{x \phi'}{\phi}, f(\phi)\right).\end{align*}
\end{proof}
\begin{proposition} Let $V$ and $H$ be respectively the vertical and horizontal halves of the Riordan array $(g(x), f(x))$. Then we have
$$V^{-1} \cdot H = (1, f(x)).$$
\end{proposition}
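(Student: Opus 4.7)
The plan is to observe that the corollaries preceding the proposition give explicit Riordan-array formulas for both $V$ and $H$ sharing the same first component, so the proof reduces to a one-line application of the Riordan group multiplication law.

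More precisely, I would first record the formulas
\[
V=\left(\frac{x\phi'(x)g(\phi(x))}{\phi(x)},\,\phi(x)\right),\qquad H=\left(\frac{x\phi'(x)g(\phi(x))}{\phi(x)},\,f(\phi(x))\right),
\]
supplied by the formula for $V$ in Lemma~1 and by the corollary giving $H=\bigl(\tfrac{x\phi' g(\phi)}{\phi},f(\phi)\bigr)$, where in both cases $\phi(x)=\mathrm{Rev}(x^2/f(x))$. The key structural point is that the two halves agree in their first component; only the second components differ, namely $\phi(x)$ for $V$ and $f(\phi(x))$ for $H$.

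Next I would compute $V^{-1}$ using the inversion rule $(g,f)^{-1}=(1/g(\bar f),\bar f)$ from Section~1. Writing $A(x):=\frac{x\phi'(x)g(\phi(x))}{\phi(x)}$ and letting $\bar\phi$ be the compositional inverse of $\phi$, this gives
\[
V^{-1}=\left(\frac{1}{A(\bar\phi(x))},\,\bar\phi(x)\right).
\]
Multiplying by $H=(A(x),f(\phi(x)))$ by the Riordan product rule yields
\[
V^{-1}\cdot H=\left(\frac{1}{A(\bar\phi(x))}\cdot A(\bar\phi(x)),\,f(\phi(\bar\phi(x)))\right)=(1,f(x)),
\]
which is the claimed identity.

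There is essentially no obstacle beyond bookkeeping: the cancellation of $A(\bar\phi(x))$ in the first coordinate and the collapse $\phi\circ\bar\phi=\mathrm{id}$ in the second coordinate are immediate. The only prerequisite check is that $\phi$ indeed has a compositional inverse, which follows from $\phi(0)=0$ and $\phi'(0)\ne 0$ (since $x^2/f(x)$ has a zero of order exactly one at the origin, as $f_1\ne 0$). Thus the proposition is a direct consequence of the fact, visible from the corollaries, that taking vertical and horizontal halves produces Riordan arrays with identical $g$-components.
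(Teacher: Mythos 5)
Your proof is correct and rests on the same observation the paper uses: by the preceding lemma and corollary, $V$ and $H$ share the identical first component $\tfrac{x\phi'g(\phi)}{\phi}$ and differ only in their second components ($\phi$ versus $f(\phi)$), so that $H=V\cdot(1,f)$. The paper reaches this by factoring out the common left factor $(g(\phi),x)\cdot\bigl(\tfrac{x\phi'}{\phi},\phi\bigr)$, while you invert $V$ explicitly and multiply; these are the same one-line group-law argument in two equivalent packagings.
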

In general, we have
$$V=(g(\phi), x)\cdot \left(\frac{x \phi'}{\phi}, \phi\right),$$
$$H=(g(\phi), x)\cdot \left(\frac{x \phi'}{\phi}, f(\phi)\right)=(g(\phi), x)\cdot \left(\frac{x \phi'}{\phi}, \phi\right)\cdot (1, f).$$
Thus
$$H = V \cdot (1,f).$$

We have a generic factorization of elements of the associated or Lagrange subgroup of the Riordan group.
\begin{proposition}
Let $A=(1, f(x))$ be an element of the associated group. Let $H$ and $V$ respectively be the horizontal half and the vertical half of $A$. The array $V$ is an element of the hitting time subgroup, and thus so is $V^{-1}$. We then have
$$ V \cdot A = H,$$ or equivalently,
$$ A = V^{-1} \cdot H.$$
\end{proposition}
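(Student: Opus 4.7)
The plan is to deduce this proposition as a direct specialization of the preceding material to the case $g(x)=1$. I would proceed in three short steps.

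First I would apply the earlier lemma on the vertical half to $A=(1,f(x))$. Setting $g\equiv 1$ in the formula
$$V=\left(\frac{x\phi'(x)\,g(\phi(x))}{\phi(x)},\,\phi(x)\right)$$
collapses it to
$$V=\left(\frac{x\phi'(x)}{\phi(x)},\,\phi(x)\right), \qquad \phi(x)=\text{Rev}\!\left(\frac{x^2}{f(x)}\right).$$
This is precisely the defining form $(x\psi'(x)/\psi(x),\,\psi(x))$ of an element of the hitting-time subgroup (with $\psi=\phi$), so $V$ lies in that subgroup.

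Next I would observe that $V^{-1}$ is also of hitting-time form. Since the hitting-time elements form a subgroup of the Riordan group, this is automatic; an explicit verification uses the inversion rule $(g,f)^{-1}=(1/g(\bar f),\,\bar f)$ together with $\bar\phi'(x)=1/\phi'(\bar\phi(x))$, which after a short computation yields
$$V^{-1}=\left(\frac{x\,\bar\phi'(x)}{\bar\phi(x)},\,\bar\phi(x)\right),$$
again of hitting-time form.

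Finally, the identity $V\cdot A=H$ falls out of the general factorization displayed immediately before the statement, namely $H=V\cdot(1,f)$, valid for any Riordan array $(g(x),f(x))$. Specializing to $g=1$, the right-hand factor $(1,f)$ coincides with $A$ itself, so $H=V\cdot A$ and equivalently $A=V^{-1}\cdot H$. The only real content of the proposition is thus the identification of the vertical half of an associated-subgroup element as a hitting-time element, and that is immediate from the closed-form expression for $V$; I do not anticipate any substantive obstacle, merely the bookkeeping of specializing the earlier formulas.
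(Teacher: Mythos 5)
Your proposal is correct and follows essentially the same route as the paper, which states this proposition without a separate proof precisely because it is the specialization $g=1$ of the identities displayed just before it: $V=(g(\phi),x)\cdot\left(\frac{x\phi'}{\phi},\phi\right)$ and $H=V\cdot(1,f)$. Your explicit check that $V^{-1}=\left(\frac{x\bar{\phi}'(x)}{\bar{\phi}(x)},\bar{\phi}(x)\right)$ is also exactly the fact the paper invokes later when computing $V^{-1}$ for the associated subgroup.
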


We note that we can express the inverse $V^{-1}$ of the vertical half $V$ of $(1, f(x))$ in terms of $f(x)$.
\begin{proposition} For the vertical half $V$ of the array $(1, f(x))$ we have
$$ V^{-1}=\left(2-\frac{xf'(x)}{f(x)}, \frac{x^2}{f(x)}\right).$$
\end{proposition}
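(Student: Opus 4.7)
The plan is to combine the vertical-half lemma (specialized to $g=1$) with the Riordan inverse formula, then use implicit differentiation to eliminate $\phi$ in favor of $f$.

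First I would apply the lemma for the vertical half to $M=(1,f(x))$. Writing $\phi(x)=\mathrm{Rev}(x^2/f(x))$, so that $\phi$ is the compositional inverse of $x^2/f(x)$ and in particular $\phi(x)^2/f(\phi(x))=x$, i.e.\ $f(\phi(x))=\phi(x)^2/x$, the lemma gives
$$V=\left(\frac{\phi(x)\phi'(x)}{f(\phi(x))},\phi(x)\right)=\left(\frac{x\phi'(x)}{\phi(x)},\phi(x)\right).$$

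Next, apply the Riordan inverse formula $(G,F)^{-1}=(1/G(\bar F),\bar F)$ with $G(x)=x\phi'(x)/\phi(x)$ and $F(x)=\phi(x)$. The compositional inverse of $\phi$ is exactly $\bar F(x)=x^2/f(x)$, so the second component of $V^{-1}$ is already $x^2/f(x)$ as required. For the first component, compute
$$G(\bar F(x))=\frac{\bar F(x)\,\phi'(\bar F(x))}{\phi(\bar F(x))}=\frac{\bar F(x)\,\phi'(\bar F(x))}{x},$$
since $\phi(\bar F(x))=x$. Differentiating that same identity yields $\phi'(\bar F(x))\cdot \bar F'(x)=1$, hence $\phi'(\bar F(x))=1/\bar F'(x)$. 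Consequently
$$\frac{1}{G(\bar F(x))}=\frac{x\,\bar F'(x)}{\bar F(x)}.$$

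Finally I would substitute $\bar F(x)=x^2/f(x)$ and compute its logarithmic derivative. A short calculation gives
$$\frac{x\bar F'(x)}{\bar F(x)}=\frac{x\bigl(2xf(x)-x^2f'(x)\bigr)/f(x)^2}{x^2/f(x)}=2-\frac{xf'(x)}{f(x)},$$
which together with $\bar F(x)=x^2/f(x)$ yields the claimed formula for $V^{-1}$.

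The routine part is the chain of algebraic simplifications; the only subtle step is recognizing that $\phi'(\bar F(x))$ has no useful closed form on its own but, via implicit differentiation of $\phi(\bar F(x))=x$, turns into $1/\bar F'(x)$, which is precisely what converts the $\phi$-expression for $V$ into the advertised $f$-expression for $V^{-1}$.
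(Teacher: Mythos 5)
Your proof is correct and follows essentially the same route as the paper: identify $V$ as the hitting-time element $\left(\frac{x\phi'}{\phi},\phi\right)$ and invert it, with $\bar\phi(x)=x^2/f(x)$, then compute the logarithmic derivative. The only difference is that the paper simply cites the fact that the hitting-time subgroup is closed under inversion (so $V^{-1}=\left(\frac{x\bar\phi'}{\bar\phi},\bar\phi\right)$ immediately), whereas you rederive that fact via the general Riordan inverse formula and implicit differentiation of $\phi(\bar F(x))=x$ --- a correct and self-contained, if slightly longer, version of the same argument.
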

\begin{proof}
We have $$V=\left(\frac{x \phi'(x)}{\phi(x)}, \phi(x)\right),$$ where
$$\phi(x)=\text{Rev}\left(\frac{x^2}{f(x)}\right).$$
Now since $V$ is in the hitting-time subgroup, its inverse will be given by
$$V^{-1}=\left(\frac{x \bar{\phi}'(x)}{\bar{\phi}(x)}, \bar{\phi}(x)\right).$$
Here, we have
$$\bar{\phi}(x)=\frac{x^2}{f(x)}.$$
We find that
$$\frac{x \bar{\phi}'(x)}{\bar{\phi}(x)}=\frac{2f(x)-xf'(x)}{f(x)},$$ and the result follows.
\end{proof}

It is of interest to calculate the $A$-sequences of the matrices $H$ and $V$. For the horizontal half, we have the following result \cite{Central}.
\begin{proposition} Let the $A$-sequence of the Riordan array $A=(g,f)$ have generating function $A(x)$. Then the generating function of the $A$-sequence of the horizontal half $H$ of $A$ is given by $A(x)^2$.
\end{proposition}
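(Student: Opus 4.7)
The plan is to exploit the standard identity $A(x) = x/\bar{f}(x)$ for the generating function of the $A$-sequence of a Riordan array $(g,f)$, which follows from the defining equation $f(x) = x\,A(f(x))$ by substituting $x \mapsto \bar{f}(x)$. Because the $A$-sequence depends only on the second component of the array, I need only track the $f$-part of the horizontal half.

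From the Corollary preceding the proposition, the second component of $H$ is $f(\phi(x))$, where $\phi(x) = \text{Rev}(x^2/f(x))$, i.e.\ $\bar{\phi}(x) = x^2/f(x)$. Hence the $A$-sequence generating function of $H$ is $A_H(x) = x/\overline{f\circ\phi}(x)$, and the crux of the proof is computing this compositional inverse. Using $\overline{f\circ\phi} = \bar{\phi}\circ\bar{f}$ together with $f(\bar{f}(x)) = x$, I obtain
$$\overline{f\circ\phi}(x) \;=\; \bar{\phi}(\bar{f}(x)) \;=\; \frac{\bar{f}(x)^2}{f(\bar{f}(x))} \;=\; \frac{\bar{f}(x)^2}{x}.$$

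Substituting this in gives
$$A_H(x) \;=\; \frac{x}{\bar{f}(x)^2/x} \;=\; \left(\frac{x}{\bar{f}(x)}\right)^{\!2} \;=\; A(x)^2,$$
which is the claimed identity. I do not foresee any real obstacle: the argument is a short formal manipulation with compositional inverses, and the standing Riordan-array hypotheses on $f$ ensure that $\bar{f}$, $\phi$, and $\bar{\phi}$ all exist as power series and that $f(\phi(x))$ has nonzero linear coefficient, so that the $A$-sequence of $H$ is itself well defined.
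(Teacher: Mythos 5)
Your argument is correct. The identity $A(x)=x/\bar{f}(x)$ follows from the defining relation $f(x)=xA(f(x))$ exactly as you say, the $A$-sequence indeed depends only on the second component of the array, and your computation of $\overline{f\circ\phi}=\bar{\phi}\circ\bar{f}$ with $\bar{\phi}(x)=x^2/f(x)$ gives $\overline{f\circ\phi}(x)=\bar{f}(x)^2/x$ and hence $A_H(x)=A(x)^2$ in two lines. Note, however, that the paper offers no proof of this proposition at all: it is quoted from the reference \cite{Central} on $r$-shifted central triangles. So there is nothing in the text to compare against; your derivation is a clean, self-contained verification that relies only on the Corollary identifying the second component of $H$ as $f(\phi(x))$, which is arguably more economical than rederiving the result from the combinatorics of central coefficients. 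The only point worth making explicit (you gesture at it) is that $x^2/f(x)$ has zero constant term and nonzero linear coefficient $1/f_1$, so $\phi$ exists and $f(\phi(x))$ has linear coefficient $f_1^2\ne 0$, guaranteeing that $H$ is a genuine Riordan array with a well-defined $A$-sequence.
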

With regard to the vertical half $V$ of $A$, we have the following result.
\begin{proposition} The $A$-sequence of the vertical half of the Riordan array $A=(g,f)$ has generating function $\frac{f(x)}{x}$.
\end{proposition}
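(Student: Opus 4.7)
The plan is to use the standard characterization of the $A$-sequence of a Riordan array in terms of compositional inverses, together with the explicit description of the vertical half already established in the excerpt.

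First I would recall the basic fact that for any Riordan array $(G(x), F(x))$, the $A$-sequence is the sequence with generating function
$$A(x)=\frac{x}{\bar{F}(x)},$$
which is equivalent to the defining relation $F(x)=x\,A(F(x))$. This is the only non-trivial input and it is standard, so I would simply cite it (it is implicit in the Riordan-group references already given).

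Next I would read off the second component of $V$ from the lemma in the excerpt: the vertical half of $A=(g,f)$ has the form
$$V=\left(\frac{x\phi'(x)g(\phi(x))}{\phi(x)},\; \phi(x)\right),\qquad \phi(x)=\operatorname{Rev}\!\left(\frac{x^2}{f(x)}\right).$$
Thus the $F$-part of $V$ is $\phi(x)$, and by construction its compositional inverse is
$$\bar{\phi}(x)=\frac{x^2}{f(x)}.$$

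Finally I would apply the formula from the first step to $V$:
$$A_V(x)=\frac{x}{\bar{\phi}(x)}=\frac{x}{x^2/f(x)}=\frac{f(x)}{x},$$
which is the claim. There is essentially no obstacle here; the only thing to be careful about is matching conventions so that $\operatorname{Rev}(x^2/f(x))$ really does have $x^2/f(x)$ as its compositional inverse, and that $g$ plays no role in the $A$-sequence (as expected, since the $A$-sequence of a Riordan array depends only on its $F$-part).
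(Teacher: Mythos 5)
Your argument is correct: the paper states this proposition without proof, and the standard fact $A_V(x)=x/\bar{\phi}(x)$ applied to the second component $\phi=\operatorname{Rev}(x^2/f)$ of $V$ (so that $\bar{\phi}=x^2/f$, exactly as used in the paper's proof of the formula for $V^{-1}$) immediately gives $f(x)/x$. This is evidently the intended argument, so there is nothing to add.
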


We close this section by noting the importance of the hitting-time group element $\left(\frac{x \phi'}{\phi}, \phi\right)$.
$$ H = (g(\phi),x)\cdot \left(\frac{x \phi'}{\phi}, \frac{\phi^2}{x}\right)=\left(\frac{x \phi'}{\phi}, \phi\right)\cdot (g, f).$$
$$V=(g(\phi), x) \cdot \left(\frac{x \phi'}{\phi}, \phi\right).$$
We see that in general we have
$$ V \cdot (g, f)= (g(\phi(x)),x) \cdot H.$$

\section{Catalan rich Riordan arrays}
\begin{example}
In this example, we consider the halves of the Riordan array
$$\left(\frac{1+2x}{1+x}, \frac{-x}{1+x}\right)=\left(\frac{1}{1-x},\frac{-x}{1+x}\right)^{-1}.$$
This matrix begins
$$\left(
\begin{array}{ccccccccccc}
 1 & 0 & 0 & 0 & 0 & 0 & 0 & 0 & 0 & 0 & 0 \\
 1 & -1 & 0 & 0 & 0 & 0 & 0 & 0 & 0 & 0 & 0 \\
 -1 & 0 & 1 & 0 & 0 & 0 & 0 & 0 & 0 & 0 & 0 \\
 1 & 1 & -1 & -1 & 0 & 0 & 0 & 0 & 0 & 0 & 0 \\
 -1 & -2 & 0 & 2 & 1 & 0 & 0 & 0 & 0 & 0 & 0 \\
 1 & 3 & 2 & -2 & -3 & -1 & 0 & 0 & 0 & 0 & 0 \\
 -1 & -4 & -5 & 0 & 5 & 4 & 1 & 0 & 0 & 0 & 0 \\
 1 & 5 & 9 & 5 & -5 & -9 & -5 & -1 & 0 & 0 & 0 \\
 -1 & -6 & -14 & -14 & 0 & 14 & 14 & 6 & 1 & 0 & 0 \\
 1 & 7 & 20 & 28 & 14 & -14 & -28 & -20 & -7 & -1 & 0 \\
 -1 & -8 & -27 & -48 & -42 & 0 & 42 & 48 & 27 & 8 & 1 \\
\end{array}
\right).$$
The general term of this matrix is
$$t_{n,k}=(-1)^n \left(\binom{n}{n-k}-2 \binom{n-1}{n-k-1}\right).$$
We find that the vertical half $V$ of this matrix begins
$$\left(
\begin{array}{ccccccc}
 1 & 0 & 0 & 0 & 0 & 0 & 0 \\
 0 & -1 & 0 & 0 & 0 & 0 & 0 \\
 0 & -1 & 1 & 0 & 0 & 0 & 0 \\
 0 & -2 & 2 & -1 & 0 & 0 & 0 \\
 0 & -5 & 5 & -3 & 1 & 0 & 0 \\
 0 & -14 & 14 & -9 & 4 & -1 & 0 \\
 0 & -42 & 42 & -28 & 14 & -5 & 1 \\
\end{array}
\right),$$ with general term
$$t_{2n-k,n}=(-1)^k \left(\binom{2n-k}{n-k}-2\binom{2n-k-1}{n-k-1}\right).$$

while the horizontal half $H$ begins
$$\left(
\begin{array}{ccccccc}
 1 & 0 & 0 & 0 & 0 & 0 & 0 \\
 0 & 1 & 0 & 0 & 0 & 0 & 0 \\
 0 & 2 & 1 & 0 & 0 & 0 & 0 \\
 0 & 5 & 4 & 1 & 0 & 0 & 0 \\
 0 & 14 & 14 & 6 & 1 & 0 & 0 \\
 0 & 42 & 48 & 27 & 8 & 1 & 0 \\
 0 & 132 & 165 & 110 & 44 & 10 & 1 \\
\end{array}
\right),$$ with general term
$$t_{2n,n+k}=\binom{2n}{n-k}-2\binom{2n-1}{n-k-1}.$$
\begin{proposition} For the Riordan array $\left(\frac{1+2x}{1+x}, \frac{-x}{1+x}\right)$ we have
$$V=(1, -xc(x)) \quad \text{and}\quad H = (1, xc(x)^2).$$
\end{proposition}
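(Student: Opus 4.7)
The plan is to apply the two lemmas of Section 2 directly, computing $\phi(x)=\mathrm{Rev}(x^2/f(x))$ explicitly and then simplifying the resulting Riordan expressions using functional identities.

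First I would compute $\phi$. Since $f(x) = -x/(1+x)$, we have $x^2/f(x) = -x(1+x) = -x-x^2$. Reverting, $\phi$ satisfies $\phi^2+\phi+x=0$, so
$$\phi(x)=\frac{-1+\sqrt{1-4x}}{2}=-xc(x).$$
This gives the two algebraic identities I will lean on throughout: $\phi(1+\phi)=-x$ (equivalently $1+\phi=-x/\phi$), and, by implicit differentiation, $\phi'(2\phi+1)=-1$.

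Next I would compute $V$. By the lemma, $V=\left(\frac{x\phi'g(\phi)}{\phi},\phi\right)$. Using $1+\phi=-x/\phi$ to evaluate
$$g(\phi)=\frac{1+2\phi}{1+\phi}=-\frac{\phi(1+2\phi)}{x},$$
and then $\phi'=-1/(2\phi+1)$, the multiplier telescopes:
$$\frac{x\phi'g(\phi)}{\phi}=\frac{x}{\phi}\cdot\frac{-1}{2\phi+1}\cdot\left(-\frac{\phi(1+2\phi)}{x}\right)=1.$$
Hence $V=(1,\phi)=(1,-xc(x))$, which also confirms the general principle that the vertical half of a Riordan array lies in the hitting-time subgroup and, in this case, even reduces to an element of the associated subgroup.

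Finally I would obtain $H$ from the identity $H=V\cdot(1,f)$ established earlier in the note, so that $H=(1,f(\phi))$. Computing
$$f(\phi)=\frac{-\phi}{1+\phi}=\frac{-\phi}{-x/\phi}=\frac{\phi^2}{x}=\frac{x^2c(x)^2}{x}=xc(x)^2,$$
gives $H=(1,xc(x)^2)$ as claimed. No step here is an obstacle in the serious sense; the only place requiring care is recognising that the awkward-looking multiplier $x\phi'g(\phi)/\phi$ collapses to $1$, and this is precisely where the two identities $\phi(1+\phi)=-x$ and $\phi'(2\phi+1)=-1$ have to be used together. Everything else is Riordan bookkeeping and the standard quadratic satisfied by $c(x)$.
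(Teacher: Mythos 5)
Your proposal is correct and follows essentially the same route as the paper: compute $\phi=\mathrm{Rev}(-x(1+x))=-xc(x)$, show the multiplier $x\phi' g(\phi)/\phi$ collapses to $1$ so that $V=(1,-xc(x))$, and then obtain $H$ from $H=V\cdot(1,f)$. The only difference is cosmetic — you simplify via the implicit identities $\phi(1+\phi)=-x$ and $\phi'(2\phi+1)=-1$ where the paper manipulates explicit expressions in $\sqrt{1-4x}$ — and both verifications are sound.
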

\begin{proof}

We have $f(x)=-\frac{x}{1+x}$ and $g(x)=\frac{1+2x}{1+x}$. Thus
$$\frac{x^2}{f(x)}=-x(1+x)$$ and then we have
$$\phi=\text{Rev}\frac{x^2}{f(x)}=\text{Rev}(-x(1+x))=-x c(x).$$
We find that
$$\frac{x \phi'(x)}{\phi(x)}=\frac{1+\sqrt{1-4x}}{2 \sqrt{1-4x}},$$ while
$$g(\phi(x))=\frac{\sqrt{1-4x}(1-\sqrt{1-4x})}{2x}= c(x) \sqrt{1-4x}.$$
But then we have
$$\frac{g(\phi(x))x \phi'(x)}{\phi(x)}= c(x) \sqrt{1-4x} \frac{1+\sqrt{1-4x}}{2 \sqrt{1-4x}}=1.$$
Thus $V=(1, -xc(x))$.
Finally, we have
\begin{align*}
H &=V \cdot (1, f)\\
&=(1, -xc(x)) \cdot \left(1, \frac{-x}{1+x}\right)\\
&=\left(1, \frac{xc(x)}{1-xc(x)}\right)\\
&=(1, xc(x)^2).\end{align*}
\end{proof}
We note further that the matrix with general term $t_{n+k,k}$ begins
$$\left(
\begin{array}{ccccccccc}
 -1 & -1 & 1 & -1 & 1 & -1 & 1 & -1 & 1 \\
 1 & 0 & -1 & 2 & -3 & 4 & -5 & 6 & -7 \\
 -1 & 1 & 0 & -2 & 5 & -9 & 14 & -20 & 27 \\
 1 & -2 & 2 & 0 & -5 & 14 & -28 & 48 & -75 \\
 -1 & 3 & -5 & 5 & 0 & -14 & 42 & -90 & 165 \\
 1 & -4 & 9 & -14 & 14 & 0 & -42 & 132 & -297 \\
 -1 & 5 & -14 & 28 & -42 & 42 & 0 & -132 & 429 \\
 1 & -6 & 20 & -48 & 90 & -132 & 132 & 0 & -429 \\
 -1 & 7 & -27 & 75 & -165 & 297 & -429 & 429 & 0 \\
\end{array}
\right).$$
This square matrix contains a reversed copy of $(c(x),-xc(x))$ and its negative. The diagonal sums of this matrix give the sequence $0^n$, while the diagonal sums of the absolute value of terms gives the sequence \seqnum{A063886}, with generating function $\sqrt{\frac{1+2x}{1-2x}}$. The matrix
$$B \cdot (t_{n+k,k}) \cdot B^T$$ is the matrix that begins
$$\left(
\begin{array}{ccccccccc}
 1 & 0 & 0 & 0 & 0 & 0 & 0 & 0 & 0 \\
 2 & 1 & 0 & 0 & 0 & 0 & 0 & 0 & 0 \\
 2 & 2 & 1 & 0 & 0 & 0 & 0 & 0 & 0 \\
 2 & 2 & 2 & 1 & 0 & 0 & 0 & 0 & 0 \\
 2 & 2 & 2 & 2 & 1 & 0 & 0 & 0 & 0 \\
 2 & 2 & 2 & 2 & 2 & 1 & 0 & 0 & 0 \\
 2 & 2 & 2 & 2 & 2 & 2 & 1 & 0 & 0 \\
 2 & 2 & 2 & 2 & 2 & 2 & 2 & 1 & 0 \\
 2 & 2 & 2 & 2 & 2 & 2 & 2 & 2 & 1 \\
\end{array}
\right).$$
This is the Riordan array $\left(\frac{1+x}{1-x}, x\right)$.
Taken modulo $2$, the matrix $(t_{n+k,k})$ begins
$$\left(
\begin{array}{ccccccccc}
 1 & 1 & 1 & 1 & 1 & 1 & 1 & 1 & 1 \\
 1 & 0 & 1 & 0 & 1 & 0 & 1 & 0 & 1 \\
 1 & 1 & 0 & 0 & 1 & 1 & 0 & 0 & 1 \\
 1 & 0 & 0 & 0 & 1 & 0 & 0 & 0 & 1 \\
 1 & 1 & 1 & 1 & 0 & 0 & 0 & 0 & 1 \\
 1 & 0 & 1 & 0 & 0 & 0 & 0 & 0 & 1 \\
 1 & 1 & 0 & 0 & 0 & 0 & 0 & 0 & 1 \\
 1 & 0 & 0 & 0 & 0 & 0 & 0 & 0 & 1 \\
 1 & 1 & 1 & 1 & 1 & 1 & 1 & 1 & 0 \\
\end{array}
\right),$$ and the diagonal sums of this matrix yield Gould's sequence \seqnum{A001316}, which begins
$$1, 2, 2, 4, 2, 4, 4, 8, 2, 4, 4, 8, 4, 8, 8, 16,\ldots.$$

It is instructive to explore the inverse matrix $\left(\frac{1}{1-x},\frac{-x}{1+x}\right)$ which begins
$$\left(
\begin{array}{ccccccc}
 1 & 0 & 0 & 0 & 0 & 0 & 0 \\
 1 & -1 & 0 & 0 & 0 & 0 & 0 \\
 1 & 0 & 1 & 0 & 0 & 0 & 0 \\
 1 & -1 & -1 & -1 & 0 & 0 & 0 \\
 1 & 0 & 2 & 2 & 1 & 0 & 0 \\
 1 & -1 & -2 & -4 & -3 & -1 & 0 \\
 1 & 0 & 3 & 6 & 7 & 4 & 1 \\
\end{array}
\right).$$
In this case, we have
$$t_{n,k}=\sum_{i=0}^{n-k}\binom{n-i-1}{n-k-i}(-1)^{n-i}.$$
The vertical half of this matrix will then have general its general term given by
$$V_{n,k}=t_{2n-k,n}=\sum_{i=0}^{n-k}\binom{2n-k-i-1}{n-k-i}(-1)^{k+i}.$$
The Riordan array $V$ begins
$$\left(
\begin{array}{ccccccc}
 1 & 0 & 0 & 0 & 0 & 0 & 0 \\
 0 & -1 & 0 & 0 & 0 & 0 & 0 \\
 2 & -1 & 1 & 0 & 0 & 0 & 0 \\
 6 & -4 & 2 & -1 & 0 & 0 & 0 \\
 22 & -13 & 7 & -3 & 1 & 0 & 0 \\
 80 & -46 & 24 & -11 & 4 & -1 & 0 \\
 296 & -166 & 86 & -40 & 16 & -5 & 1 \\
\end{array}
\right).$$
Since $f(x)=-\frac{x}{1+x}$, we find that again we have
$$\phi(x)=-x c(x).$$
Now $g(x)=\frac{1}{1-x}$, so that $g(\phi(x))=\frac{1}{1+xc(x)}$.
Then
$$\frac{g(\phi(x))x\phi'(x)}{\phi(x)}=\frac{1}{1+xc(x)}\frac{1+\sqrt{1-4x}}{2 \sqrt{1-4x}}=\frac{1-x+\sqrt{1-4x}}{\sqrt{1-4x}(x+2)}.$$
We then have that
$$V=\left(\frac{1-x+\sqrt{1-4x}}{\sqrt{1-4x}(x+2)},-xc(x)\right)=\left(\frac{1+x-2x^2}{1+x},-x(1+x)\right)^{-1}.$$
We can factorize $V^{-1}$ as follows.
$$V^{-1}=\left(\frac{1+x-2x^2}{1+x},-x(1+x)\right)=\left(\frac{1+x-2x^2}{1+x},-x\right)\cdot (1, x(1-x)).$$
This gives us the following factorization of $V$.
$$V = (1, xc(x))\cdot \left(\frac{1-x}{1-x-2x^2}, -x\right).$$
The first matrix is a Catalan matrix, and the second matrix is the matrix that begins
$$\left(
\begin{array}{ccccccc}
 1 & 0 & 0 & 0 & 0 & 0 & 0 \\
 0 & -1 & 0 & 0 & 0 & 0 & 0 \\
 2 & 0 & 1 & 0 & 0 & 0 & 0 \\
 2 & -2 & 0 & -1 & 0 & 0 & 0 \\
 6 & -2 & 2 & 0 & 1 & 0 & 0 \\
 10 & -6 & 2 & -2 & 0 & -1 & 0 \\
 22 & -10 & 6 & -2 & 2 & 0 & 1 \\
\end{array}
\right).$$
Here, the numbers
$$1,0,2,2,6,10,22,\ldots$$ are the Jacobsthal numbers of the second kind \seqnum{A078008}.
The horizontal half is given by
$$H= V\cdot (1,f(x))=(1, xc(x))\cdot \left(\frac{1-x}{1-x-2x^2}, -x\right)\cdot \left(1, \frac{-x}{1+x}\right)$$ and thus we have the factorization
$$H= (1, xc(x)) \cdot \left(\frac{1-x}{1-x-2x^2},\frac{x}{1-x}\right).$$
This matrix has general term $\sum_{i=0}^{n-k}\binom{2n-i-1}{n-k-i}(-1)^i$, and begins
$$\left(
\begin{array}{ccccccc}
 1 & 0 & 0 & 0 & 0 & 0 & 0 \\
 0 & 1 & 0 & 0 & 0 & 0 & 0 \\
 2 & 2 & 1 & 0 & 0 & 0 & 0 \\
 6 & 7 & 4 & 1 & 0 & 0 & 0 \\
 22 & 24 & 16 & 6 & 1 & 0 & 0 \\
 80 & 86 & 62 & 29 & 8 & 1 & 0 \\
 296 & 314 & 239 & 128 & 46 & 10 & 1 \\
\end{array}
\right).$$
The sequence
$$1,0,2,6,22,80,296,\ldots$$ which is a Catalan image of the Jacobsthal numbers of the second kind, gives the central coefficients of the matrix $\left(\frac{1}{1-x},\frac{-x}{1+x}\right)$. The Hankel transform of this sequence begins
$$1, 2, 0, -8, -16, 0, 64, 128, 0, -512, -1024,\ldots$$ This sequence has generating function $\frac{1}{1-2x+4x^2}$.
\end{example}
In general, we have the following result when $f(x)=\frac{x}{1-x}$. 
\begin{proposition} The vertical half of the Riordan array $\left(g(x), \frac{x}{1-x}\right)$ is given by
$$V=\left(\frac{g(xc(x))}{c(x)\sqrt{1-4x}},  xc(x)\right).$$
The horizontal half of the Riordan array $\left(g(x), \frac{x}{1-x}\right)$ is given by
$$H=\left(\frac{g(xc(x))}{c(x)\sqrt{1-4x}},  xc(x)^2\right).$$
\end{proposition}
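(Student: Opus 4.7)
The plan is to apply the vertical-half lemma directly, using the distinguished reversal identity $\text{Rev}(x(1-x)) = xc(x)$ that was recorded in the preliminaries, and then obtain the horizontal half via the relation $H = V\cdot (1,f)$ established in Proposition (the one preceding the factorization discussion).

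First, I would compute $\phi(x)$. Since $f(x) = \frac{x}{1-x}$, we have $\frac{x^2}{f(x)} = x(1-x)$, and hence
$$\phi(x) = \text{Rev}\!\left(\frac{x^2}{f(x)}\right) = \text{Rev}(x(1-x)) = xc(x).$$
Next I would compute the multiplier $\frac{x\phi'(x)}{\phi(x)}$. Writing $xc(x) = \frac{1-\sqrt{1-4x}}{2}$, one immediately gets $\phi'(x) = \frac{1}{\sqrt{1-4x}}$, so
$$\frac{x\phi'(x)}{\phi(x)} = \frac{x}{xc(x)\sqrt{1-4x}} = \frac{1}{c(x)\sqrt{1-4x}}.$$
Plugging this into the formula $V = \left(\frac{x\phi'(x) g(\phi(x))}{\phi(x)}, \phi(x)\right)$ from the vertical-half lemma yields the first claim, namely $V = \left(\frac{g(xc(x))}{c(x)\sqrt{1-4x}}, xc(x)\right)$.

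For the horizontal half I would invoke $H = V \cdot (1,f)$. The Riordan multiplication gives $H = \left(\frac{g(xc(x))}{c(x)\sqrt{1-4x}}, \frac{xc(x)}{1-xc(x)}\right)$, and the only thing remaining is to simplify the second component. Using the Catalan functional equation $c(x) = \frac{1}{1-xc(x)}$, we have $\frac{xc(x)}{1-xc(x)} = xc(x)\cdot c(x) = xc(x)^2$, which delivers $H = \left(\frac{g(xc(x))}{c(x)\sqrt{1-4x}}, xc(x)^2\right)$.

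No step presents a real obstacle: the whole argument is a direct specialization of the two general half-formulas once $\phi(x) = xc(x)$ is identified. The only place where a small algebraic manipulation is needed is in simplifying $x\phi'/\phi$ and in collapsing $xc(x)/(1-xc(x))$ to $xc(x)^2$; both are standard identities for the Catalan generating function.
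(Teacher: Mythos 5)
Your proposal is correct and follows essentially the same route as the paper: identify $\phi(x)=\mathrm{Rev}(x(1-x))=xc(x)$, compute $\phi'(x)=\frac{1}{\sqrt{1-4x}}$ to get the multiplier $\frac{1}{c(x)\sqrt{1-4x}}$ in the vertical-half lemma, and then obtain $H$ from $H=V\cdot(1,f)$ together with the identity $\frac{1}{1-xc(x)}=c(x)$. No issues.
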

\begin{proof}
We have $\phi(x)=xc(x)$. Then $\phi'(x)=\frac{1}{\sqrt{1-4x}}$. Thus
$$\frac{g(\phi(x))x \phi'(x)}{\phi(x)}=\frac{g(xc(x))x}{\sqrt{1-4x}x c(x)}=\frac{g(xc(x))}{c(x)\sqrt{1-4x}}.$$
The first result follows from this. Now we have
$$H= V\cdot \left(1, \frac{x}{1-x}\right)=\left(\frac{g(xc(x))}{c(x)\sqrt{1-4x}}, \frac{xc(x)}{1-xc(x)}\right).$$
The result now follows since we have $\frac{1}{1-xc(x)}=c(x)$.
\end{proof}
\begin{example}
We consider the Riordan array $\left(1-2x, \frac{1}{1-x}\right)$ which begins
$$\left(
\begin{array}{cccccccc}
 1 & 0 & 0 & 0 & 0 & 0 & 0 & 0 \\
 -2 & 1 & 0 & 0 & 0 & 0 & 0 & 0 \\
 0 & -1 & 1 & 0 & 0 & 0 & 0 & 0 \\
 0 & -1 & 0 & 1 & 0 & 0 & 0 & 0 \\
 0 & -1 & -1 & 1 & 1 & 0 & 0 & 0 \\
 0 & -1 & -2 & 0 & 2 & 1 & 0 & 0 \\
 0 & -1 & -3 & -2 & 2 & 3 & 1 & 0 \\
 0 & -1 & -4 & -5 & 0 & 5 & 4 & 1 \\
\end{array}
\right).$$
Now $g(\phi(x))=1-2 \phi(x)=1-2xc(x)=\sqrt{1-4x}$, and hence
we have
$$H=\left(\frac{1}{c(x)}, xc(x)\right).$$
This matrix begins
$$\left(
\begin{array}{cccccccc}
 1 & 0 & 0 & 0 & 0 & 0 & 0 & 0 \\
 -1 & 1 & 0 & 0 & 0 & 0 & 0 & 0 \\
 -1 & 0 & 1 & 0 & 0 & 0 & 0 & 0 \\
 -2 & 0 & 1 & 1 & 0 & 0 & 0 & 0 \\
 -5 & 0 & 2 & 2 & 1 & 0 & 0 & 0 \\
 -14 & 0 & 5 & 5 & 3 & 1 & 0 & 0 \\
 -42 & 0 & 14 & 14 & 9 & 4 & 1 & 0 \\
 -132 & 0 & 42 & 42 & 28 & 14 & 5 & 1 \\
\end{array}
\right).$$
We note that
$$V^{-1}=\left(\frac{1}{1-x}, x(1-x)\right).$$
We have
$$H = \left(\frac{1}{c(x)}, xc(x)^2\right),$$ which begins
$$\left(
\begin{array}{cccccccc}
 1 & 0 & 0 & 0 & 0 & 0 & 0 & 0 \\
 -1 & 1 & 0 & 0 & 0 & 0 & 0 & 0 \\
 -1 & 1 & 1 & 0 & 0 & 0 & 0 & 0 \\
 -2 & 2 & 3 & 1 & 0 & 0 & 0 & 0 \\
 -5 & 5 & 9 & 5 & 1 & 0 & 0 & 0 \\
 -14 & 14 & 28 & 20 & 7 & 1 & 0 & 0 \\
 -42 & 42 & 90 & 75 & 35 & 9 & 1 & 0 \\
 -132 & 132 & 297 & 275 & 154 & 54 & 11 & 1 \\
\end{array}
\right).$$
We have
$$H^{-1}=\left(1+x, \frac{x}{(1+x)^2}\right).$$
We note that the Hankel transform of the sequence
$$1,-1,-1,-2,-5,-14,-42,\ldots$$ is the sequence
$$1,-2,3,-4,5,-6,-7,\ldots$$ with generating function $\frac{1}{(1+x)^2}$.
\end{example}
We now look at the general case of the parameterized Riordan array $\left(\frac{1}{1-rx}, \frac{x}{1-x}\right)$. We obtain that
$$V=\left(\frac{1+\sqrt{1-4x}}{\sqrt{1-4x}(2-r+r\sqrt{1-4x})}, x c(x)\right).$$
The first column of this matrix is then given by the polynomials that begin
$$1, r+1, r^2+2r+3, r^3 + 3r^2 + 6r + 10, \ldots$$ with coefficient array
$$\left(\frac{1+\sqrt{1-4x}}{2 \sqrt{1-4x}}, xc(x)\right),$$ which begins
$$\left(
\begin{array}{cccccccc}
 1 & 0 & 0 & 0 & 0 & 0 & 0 & 0 \\
 1 & 1 & 0 & 0 & 0 & 0 & 0 & 0 \\
 3 & 2 & 1 & 0 & 0 & 0 & 0 & 0 \\
 10 & 6 & 3 & 1 & 0 & 0 & 0 & 0 \\
 35 & 20 & 10 & 4 & 1 & 0 & 0 & 0 \\
 126 & 70 & 35 & 15 & 5 & 1 & 0 & 0 \\
 462 & 252 & 126 & 56 & 21 & 6 & 1 & 0 \\
 1716 & 924 & 462 & 210 & 84 & 28 & 7 & 1 \\
\end{array}
\right).$$
Thus
$$V=\left(\left(\frac{1+\sqrt{1-4x}}{2 \sqrt{1-4x}}, xc(x)\right).\frac{1}{1-rx}, xc(x)\right).$$
We note that the Hankel transform of the first column of $V$ is then given by
$$h_n = [x^n] \frac{1}{1-2x+(r-1)^2x^2}.$$
When $g(x)=1-rx$, we obtain that
$$V=\left(\frac{1-2rx+\sqrt{1-4x}}{2\sqrt{1-4x}}, x c(x)\right).$$
Then the first column of $V$ begins
$$1, 1 - r, 3 - 2r, 2(5 - 3r), 5(7 - 4r), 14(9 - 5r), 42(11 - 6r),\ldots.$$
The Hankel transform of this sequence is then given by
$$h_n = [x^n]\frac{1-(r^2-4r+2)x+(1-r)^2x^2}{(1+2(r-1)x+x^2)^2}.$$
\section{Two special matrices}
The matrix $\left(\frac{1+x}{1-x}, x\right)$ and its inverse, $\left(\frac{1-x}{1+x}, x\right)$ deserve special mention. These matrices begin, respectively,
$$\left(
\begin{array}{cccccc}
 1 & 0 & 0 & 0 & 0 & 0 \\
 2 & 1 & 0 & 0 & 0 & 0 \\
 2 & 2 & 1 & 0 & 0 & 0 \\
 2 & 2 & 2 & 1 & 0 & 0 \\
 2 & 2 & 2 & 2 & 1 & 0 \\
 2 & 2 & 2 & 2 & 2 & 1 \\
\end{array}
\right),\quad\text{and}\quad \left(
\begin{array}{cccccc}
 1 & 0 & 0 & 0 & 0 & 0 \\
 -2 & 1 & 0 & 0 & 0 & 0 \\
 2 & -2 & 1 & 0 & 0 & 0 \\
 -2 & 2 & -2 & 1 & 0 & 0 \\
 2 & -2 & 2 & -2 & 1 & 0 \\
 -2 & 2 & -2 & 2 & -2 & 1 \\
\end{array}
\right).$$ We now, respectively, take the inverse binomial and the binomial transform of these matrices, to obtain
$$M_1=B^{-1} \cdot \left(\frac{1+x}{1-x}, x\right)=\left(\frac{1+2x}{1+x}, \frac{x}{1+x}\right)\quad (\seqnum{A112466})$$ and
$$M_2=B \cdot  \left(\frac{1-x}{1+x}, x\right)=\left(\frac{1-2x}{1-x}, \frac{x}{1-x}\right)\quad (\seqnum{A112467}).$$
We obtain the matrices that begin
$$\left(
\begin{array}{cccccccc}
 1 & 0 & 0 & 0 & 0 & 0 & 0 & 0 \\
 1 & 1 & 0 & 0 & 0 & 0 & 0 & 0 \\
 -1 & 0 & 1 & 0 & 0 & 0 & 0 & 0 \\
 1 & -1 & -1 & 1 & 0 & 0 & 0 & 0 \\
 -1 & 2 & 0 & -2 & 1 & 0 & 0 & 0 \\
 1 & -3 & 2 & 2 & -3 & 1 & 0 & 0 \\
 -1 & 4 & -5 & 0 & 5 & -4 & 1 & 0 \\
 1 & -5 & 9 & -5 & -5 & 9 & -5 & 1 \\
\end{array}
\right)$$ and
$$\left(
\begin{array}{cccccccc}
 1 & 0 & 0 & 0 & 0 & 0 & 0 & 0 \\
 -1 & 1 & 0 & 0 & 0 & 0 & 0 & 0 \\
 -1 & 0 & 1 & 0 & 0 & 0 & 0 & 0 \\
 -1 & -1 & 1 & 1 & 0 & 0 & 0 & 0 \\
 -1 & -2 & 0 & 2 & 1 & 0 & 0 & 0 \\
 -1 & -3 & -2 & 2 & 3 & 1 & 0 & 0 \\
 -1 & -4 & -5 & 0 & 5 & 4 & 1 & 0 \\
 -1 & -5 & -9 & -5 & 5 & 9 & 5 & 1 \\
\end{array}
\right),$$ respectively. These have general elements given by
$$(-1)^{n-k}\left(\binom{n}{n-k}-2 \binom{n-1}{n-k-1}\right)\quad \text{and}\quad \left(\binom{n}{n-k}-2 \binom{n-1}{n-k-1}\right) $$ respectively.
The vertical and horizontal halves of these matrices begin, respectively,
$$V_1=\left(
\begin{array}{ccccccc}
 1 & 0 & 0 & 0 & 0 & 0 & 0 \\
 0 & 1 & 0 & 0 & 0 & 0 & 0 \\
 0 & -1 & 1 & 0 & 0 & 0 & 0 \\
 0 & 2 & -2 & 1 & 0 & 0 & 0 \\
 0 & -5 & 5 & -3 & 1 & 0 & 0 \\
 0 & 14 & -14 & 9 & -4 & 1 & 0 \\
 0 & -42 & 42 & -28 & 14 & -5 & 1 \\
\end{array}
\right),$$
$$H_1=\left(
\begin{array}{ccccccc}
 1 & 0 & 0 & 0 & 0 & 0 & 0 \\
 0 & 1 & 0 & 0 & 0 & 0 & 0 \\
 0 & -2 & 1 & 0 & 0 & 0 & 0 \\
 0 & 5 & -4 & 1 & 0 & 0 & 0 \\
 0 & -14 & 14 & -6 & 1 & 0 & 0 \\
 0 & 42 & -48 & 27 & -8 & 1 & 0 \\
 0 & -132 & 165 & -110 & 44 & -10 & 1 \\
\end{array}
\right),$$ 
$$V_2=\left(
\begin{array}{ccccccc}
 1 & 0 & 0 & 0 & 0 & 0 & 0 \\
 0 & 1 & 0 & 0 & 0 & 0 & 0 \\
 0 & 1 & 1 & 0 & 0 & 0 & 0 \\
 0 & 2 & 2 & 1 & 0 & 0 & 0 \\
 0 & 5 & 5 & 3 & 1 & 0 & 0 \\
 0 & 14 & 14 & 9 & 4 & 1 & 0 \\
 0 & 42 & 42 & 28 & 14 & 5 & 1 \\
\end{array}
\right),$$ and 
$$H_2=\left(
\begin{array}{ccccccc}
 1 & 0 & 0 & 0 & 0 & 0 & 0 \\
 0 & 1 & 0 & 0 & 0 & 0 & 0 \\
 0 & 2 & 1 & 0 & 0 & 0 & 0 \\
 0 & 5 & 4 & 1 & 0 & 0 & 0 \\
 0 & 14 & 14 & 6 & 1 & 0 & 0 \\
 0 & 42 & 48 & 27 & 8 & 1 & 0 \\
 0 & 132 & 165 & 110 & 44 & 10 & 1 \\
\end{array}
\right).$$ 
\begin{proposition} For the Riordan array $M_2=\left(\frac{1-2x}{1-x}, \frac{x}{1-x}\right)$ we have 
$$V_2 = (1, xc(x))$$ and 
$$H_2 = (1, xc(x)^2).$$
\end{proposition}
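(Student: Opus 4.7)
The plan is to apply the general formulas from the preceding proposition (for Riordan arrays of the form $\left(g(x),\frac{x}{1-x}\right)$) to the specific case $g(x)=\frac{1-2x}{1-x}$, and then simplify using two standard identities for the Catalan generating function: $1-2xc(x)=\sqrt{1-4x}$ and $\frac{1}{1-xc(x)}=c(x)$, both of which follow from the defining equation $c(x)=1+xc(x)^2$.

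First I would invoke the earlier proposition, which gives
$$V_2=\left(\frac{g(xc(x))}{c(x)\sqrt{1-4x}},xc(x)\right),\qquad H_2=\left(\frac{g(xc(x))}{c(x)\sqrt{1-4x}},xc(x)^2\right),$$
so the only real task is to show that the $g$-function simplifies to $1$. Substituting $g(x)=\frac{1-2x}{1-x}$, I would compute
$$g(xc(x))=\frac{1-2xc(x)}{1-xc(x)}=\frac{\sqrt{1-4x}}{1-xc(x)}=\sqrt{1-4x}\,c(x),$$
where the two Catalan identities are applied in succession. Dividing by $c(x)\sqrt{1-4x}$ then yields $\frac{g(xc(x))}{c(x)\sqrt{1-4x}}=1$, which gives both desired formulas at once.

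The argument is essentially a one-line specialization, so there is no serious obstacle; the only thing that requires a moment's care is confirming the identity $1-2xc(x)=\sqrt{1-4x}$, which I would justify directly from $xc(x)=\frac{1-\sqrt{1-4x}}{2}$. As a sanity check, I would verify that the first few entries of the computed $V_2$ and $H_2$ match the displayed truncated matrices, in particular that the leftmost nonzero column of $V_2$ reproduces the Catalan numbers $1,1,2,5,14,42,\ldots$ shifted down by one row, as expected for $(1,xc(x))$.
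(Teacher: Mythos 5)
Your proposal is correct and follows essentially the same route as the paper: both reduce the claim to showing $\frac{g(xc(x))\,x\phi'(x)}{\phi(x)}=1$ via the identities $1-2xc(x)=\sqrt{1-4x}$ and $\frac{1}{1-xc(x)}=c(x)$, and both obtain $H_2$ from $V_2\cdot\left(1,\frac{x}{1-x}\right)$ (equivalently, from the packaged formula with $xc(x)^2$). The only cosmetic difference is that you invoke the earlier proposition for $\left(g(x),\frac{x}{1-x}\right)$ directly, whereas the paper redoes that computation inline.
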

\begin{proof}
We have $f(x)=\frac{x}{1-x}$ and so $\phi(x)=\text{Rev}(x(1-x))=xc(x)$. 
Starting from $g(x)=\frac{1-2x}{1-x}$, we have 
\begin{align*}
\frac{g(xc(x))x (xc(x))'}{xc(x)}&=\frac{1-2xc(x)}{1-xc(x)}\cdot \frac{1}{1-4x}\cdot \frac{x}{xc(x)}\\
&=(1-2xc(x))c(x)\frac{1}{\sqrt{1-4x}}\frac{1}{c(x)}\\
&=\frac{ \sqrt{1-4x}c(x)}{\sqrt{1-4x}c(x)}\\
&=1.\end{align*}
Thus we have 
$$V_2 = (1, xc(x)).$$
Now 
$$H_2 = (1, xc(x))\cdot \left(1, \frac{x}{1-x}\right)=\left(1, \frac{xc(x)}{1-xc(x)}\right)=(1, xc(x)^2).$$
\end{proof}
In similar fashion, we can show that 
$$ V_1 = (1, -xc(x)) \quad \text{and}\quad H_1=(1,-xc(x)^2).$$

\section{The Catalan matrix $(c(x)^2, xc(x)^2)$}
In this section, we seek the Riordan array whose vertical half $V$ satisfies
$$V =\left(\frac{g(\phi(x))x \phi'(x)}{\phi(x)}, \phi(x)\right)= (c(x)^2, xc(x)^2).$$
We have $$\phi(x)=\text{Rev}\left(\frac{x^2}{f(x)}\right)=xc(x)^2=\text{Rev}\left(\frac{x}{(1+x)^2}\right).$$
From this we deduce that
$$f(x)=x(1+x)^2.$$
Now we require that
$$\frac{g(\phi(x))x \phi'(x)}{\phi(x)}=c(x)^2.$$
We can solve this equation for $g(x)$ by substituting $\bar{\phi}(x)$ for $x$, since then $g(\phi(\bar{\phi}(x)))=g(x)$.
Now $c(\bar{\phi}(x))=c\left(\frac{x}{(1+x)^2}\right)=1+x$ and
$$\sqrt{1-4 \frac{x}{(1+x)^2}}=\frac{1-x}{1+x}.$$
Now
$$\phi'(x)=\frac{(1-\sqrt{1-4x})^2}{4x^2 \sqrt{1-4x}},$$ and so
$$\phi'(\bar{\phi}(x))=\frac{(1+x)^3}{1-x}.$$
Thus we have
$$g(x)\frac{(1+x)^3}{1-x} \frac{1}{(1+x)^2}=(1+x)^2.$$
We deduce that $$g(x)=(1+x)(1-x)=1-x^2.$$
We thus obtain the following result.
\begin{proposition} The vertical half $V$ of the Riordan array
$$(1-x^2, x(1+x)^2)$$ is the Catalan matrix $$V=(c(x)^2, xc(x)^2).$$ The corresponding horizontal half $H$ is given by $$H=(c(x)^2, xc(x)^4).$$
\end{proposition}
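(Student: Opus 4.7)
The plan is to apply the formula for the vertical half given in the earlier lemma, namely $V=\left(\frac{x\phi'(x) g(\phi(x))}{\phi(x)}, \phi(x)\right)$ with $\phi(x)=\text{Rev}(x^2/f(x))$, and then simplify the resulting coefficient using standard Catalan identities. The horizontal half will then follow immediately from $H = V\cdot(1,f)$.

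First I would compute $\phi(x)$. With $f(x)=x(1+x)^2$ we have $x^2/f(x)=x/(1+x)^2$, and since $y=xc(x)^2$ satisfies $y=x(1+y)^2$ (using the defining identity $c(x)=1+xc(x)^2$), we conclude $\phi(x)=xc(x)^2=c(x)-1$. The second column of $V$ is therefore $\phi(x)=xc(x)^2$, which matches the claim.

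Next I would verify that the first column of $V$ is $c(x)^2$. This is the main technical step. Differentiating the Catalan equation $xc^2-c+1=0$ gives $c'(x)=c(x)^2/(1-2xc(x))=c(x)^2/\sqrt{1-4x}$, so $\phi'(x)=c(x)^2/\sqrt{1-4x}$. Since $g(x)=1-x^2$, we have $g(\phi(x))=1-x^2c(x)^4=1-(c(x)-1)^2=c(x)(2-c(x))$. Plugging into the formula,
$$\frac{x\phi'(x)g(\phi(x))}{\phi(x)}=\frac{x\cdot \frac{c(x)^2}{\sqrt{1-4x}}\cdot c(x)(2-c(x))}{xc(x)^2}=\frac{c(x)(2-c(x))}{\sqrt{1-4x}}.$$
Using $\sqrt{1-4x}=1-2xc(x)$ and the identity $c(x)(1-xc(x))=1$ (equivalent to $c-xc^2=1$, which is the Catalan equation rearranged), one checks that $c(x)(2-c(x))=c(x)^2\sqrt{1-4x}$, so the ratio is $c(x)^2$, as required. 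This establishes $V=(c(x)^2, xc(x)^2)$.

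Finally, for the horizontal half I would invoke the general identity $H=V\cdot(1,f)$ established earlier. By the Riordan composition rule,
$$H=(c(x)^2, xc(x)^2)\cdot(1, x(1+x)^2)=\bigl(c(x)^2,\; xc(x)^2\bigl(1+xc(x)^2\bigr)^2\bigr).$$
Since $1+xc(x)^2=c(x)$, the second component is $xc(x)^2\cdot c(x)^2=xc(x)^4$, giving $H=(c(x)^2, xc(x)^4)$. The main obstacle is really just the bookkeeping in the Catalan simplification of step two; once the identities $c'=c^2/\sqrt{1-4x}$, $xc^2=c-1$, and $c(1-xc)=1$ are in hand, the rest is mechanical.
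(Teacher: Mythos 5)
Your proof is correct. It rests on the same two ingredients as the paper --- the vertical-half formula $V=\left(\frac{x\phi'(x)g(\phi(x))}{\phi(x)},\phi(x)\right)$ with $\phi=\mathrm{Rev}(x^2/f)$, and the identity $H=V\cdot(1,f)$ --- and your treatment of $H$ is word-for-word what the paper does. The one genuine difference is the direction of the argument for $V$: the paper runs the computation backwards, starting from the target $V=(c(x)^2,xc(x)^2)$, reading off $f(x)=x(1+x)^2$ from $\phi=xc(x)^2=\mathrm{Rev}\bigl(\frac{x}{(1+x)^2}\bigr)$, and then solving $\frac{g(\phi)x\phi'}{\phi}=c(x)^2$ for $g$ by substituting $\bar{\phi}(x)=\frac{x}{(1+x)^2}$ throughout. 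That substitution converts every Catalan expression into a rational function (e.g.\ $c(\bar{\phi}(x))=1+x$ and $\sqrt{1-4\bar{\phi}(x)}=\frac{1-x}{1+x}$), so the paper never has to manipulate radicals and $g(x)=1-x^2$ simply falls out. You instead verify the claim forwards, which forces you through the simplification $c(2-c)=c^2\sqrt{1-4x}$; your chain of identities ($c'=c^2/\sqrt{1-4x}$, $xc^2=c-1$, $c(1-xc)=1$) handles this correctly, and I checked each step. The paper's route is slightly cleaner algebraically and also explains where the array $(1-x^2,x(1+x)^2)$ comes from; yours is the more natural verification once the array is already in hand. Both are valid proofs of the stated proposition.
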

The Riordan array $(1-x^2, x(1+x)^2)$ begins
$$\left(
\begin{array}{cccccccc}
 1 & 0 & 0 & 0 & 0 & 0 & 0 & 0 \\
 0 & 1 & 0 & 0 & 0 & 0 & 0 & 0 \\
 -1 & 2 & 1 & 0 & 0 & 0 & 0 & 0 \\
 0 & 0 & 4 & 1 & 0 & 0 & 0 & 0 \\
 0 & -2 & 5 & 6 & 1 & 0 & 0 & 0 \\
 0 & -1 & 0 & 14 & 8 & 1 & 0 & 0 \\
 0 & 0 & -5 & 14 & 27 & 10 & 1 & 0 \\
 0 & 0 & -4 & 0 & 48 & 44 & 12 & 1 \\
\end{array}
\right).$$
The Catalan matrix $(c(x)^2, xc(x)^2)$ begins
$$\left(
\begin{array}{cccccccc}
 1 & 0 & 0 & 0 & 0 & 0 & 0 & 0 \\
 2 & 1 & 0 & 0 & 0 & 0 & 0 & 0 \\
 5 & 4 & 1 & 0 & 0 & 0 & 0 & 0 \\
 14 & 14 & 6 & 1 & 0 & 0 & 0 & 0 \\
 42 & 48 & 27 & 8 & 1 & 0 & 0 & 0 \\
 132 & 165 & 110 & 44 & 10 & 1 & 0 & 0 \\
 429 & 572 & 429 & 208 & 65 & 12 & 1 & 0 \\
 1430 & 2002 & 1638 & 910 & 350 & 90 & 14 & 1 \\
\end{array}
\right).$$
This is \seqnum{A039598}. We note that this Bell matrix satisfies
$$(c(x)^2, xc(x)^2)=\left(\frac{1}{(1+x)^2}, \frac{x}{(1+x)^2}\right)^{-1}.$$
The corresponding horizontal half is then given by
$$H = (c(x)^2, xc(x)^4),$$ since we have
$$H= (c(x)^2, xc(x)^2)\cdot (1, x(1+x)^2)=(c(x^2), xc(x)^2(1+xc(x)^2)^2)=(c(x)^2, xc(x)^4).$$
This matrix begins
$$\left(
\begin{array}{cccccccc}
 1 & 0 & 0 & 0 & 0 & 0 & 0 & 0 \\
 2 & 1 & 0 & 0 & 0 & 0 & 0 & 0 \\
 5 & 6 & 1 & 0 & 0 & 0 & 0 & 0 \\
 14 & 27 & 10 & 1 & 0 & 0 & 0 & 0 \\
 42 & 110 & 65 & 14 & 1 & 0 & 0 & 0 \\
 132 & 429 & 350 & 119 & 18 & 1 & 0 & 0 \\
 429 & 1638 & 1700 & 798 & 189 & 22 & 1 & 0 \\
 1430 & 6188 & 7752 & 4655 & 1518 & 275 & 26 & 1 \\
\end{array}
\right).$$
The row sums of this matrix \seqnum{A125187}$(n+1)$ have an interesting property, pointed out by Michael Somos.
The sequence begins
$$ 1, 3, 12, 52, 232, 1049, 4777, 21845,\ldots.$$
Its Hankel transform is the even bisection of the Fibonacci numbers $F_{2(n+1)} \seqnum{A001906}$
$$1, 3, 8, 21, 55, 144, 377, 987, 2584, 6765, 17711,\ldots.$$
The Hankel transform of this sequence with a $1$ preprended (this is \seqnum{A125187}) has Hankel transform given by the odd bisection of the Fibonacci numbers $F_{2n+1}$ \seqnum{A001519},
$$1, 2, 5, 13, 34, 89, 233, 610, 1597, 4181, 10946,\ldots.$$
The row sum sequence has generating function
$$\frac{c(x)^2}{1-x c(x)^4}=\frac{(1-x)(1-2x)+(1+x)\sqrt{1-4x}}{2(1-5x+2x^2-x^3)}.$$
The row sums of $V=(c(x)^2, xc(x)^2)$ are given by $\binom{2n+1}{n+1}$, \seqnum{A001700}. This sequence has a Hankel transform given by
$$1,1,1,\ldots$$ while the Hankel transform of this sequence with a prepended $1$ is given by $n+1$.
The matrix $(1+x, x(1+x)^2)$ has its vertical half given by the Riordan array
$$\left(\frac{c(x)^2}{1-xc(x)^2}, xc(x)^2\right).$$ This matrix begins
$$\left(
\begin{array}{ccccccc}
 1 & 0 & 0 & 0 & 0 & 0 & 0 \\
 3 & 1 & 0 & 0 & 0 & 0 & 0 \\
 10 & 5 & 1 & 0 & 0 & 0 & 0 \\
 35 & 21 & 7 & 1 & 0 & 0 & 0 \\
 126 & 84 & 36 & 9 & 1 & 0 & 0 \\
 462 & 330 & 165 & 55 & 11 & 1 & 0 \\
 1716 & 1287 & 715 & 286 & 78 & 13 & 1 \\
\end{array}
\right).$$
We finish by noting the following.
\begin{proposition} The Riordan array $(1, x(1+x)^2)$ has its vertical half $V$ given by 
$$V = \left(\frac{1}{\sqrt{1-4x}}, xc(x)^2\right)$$ and its horizontal half $H$ given by 
$$H = \left(\frac{1}{\sqrt{1-4x}}, xc(x)^4\right).$$ 
\end{proposition}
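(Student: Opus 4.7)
The plan is to apply Lemma 2 (the vertical half formula) with $g(x)=1$ and $f(x)=x(1+x)^2$. First I would compute $\frac{x^2}{f(x)}=\frac{x}{(1+x)^2}$ and recognize $\phi(x)=\text{Rev}\!\left(\frac{x}{(1+x)^2}\right)=xc(x)^2$; this is the standard Catalan reversion already used in the preceding section. Since $g\equiv 1$, the vertical half collapses to $V=\left(\frac{x\phi'(x)}{\phi(x)},\phi(x)\right)$, so the substantive task is to verify that $\frac{x\phi'(x)}{\phi(x)}=\frac{1}{\sqrt{1-4x}}$.

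For that step I would exploit the identity $\phi(x)=xc(x)^2=c(x)-1$, which follows from the Catalan functional equation $c(x)=1+xc(x)^2$. Then $\phi'(x)=c'(x)$ and hence
$$\frac{x\phi'(x)}{\phi(x)}=\frac{xc'(x)}{c(x)-1}=\frac{xc'(x)}{xc(x)^2}=\frac{c'(x)}{c(x)^2}.$$
Differentiating $1-2xc(x)=\sqrt{1-4x}$ gives $c(x)+xc'(x)=\frac{1}{\sqrt{1-4x}}$, so $xc'(x)=\frac{1}{\sqrt{1-4x}}-c(x)$. Combining this with the elementary identity $\frac{2}{c(x)}=1+\sqrt{1-4x}$ (obtained by rationalizing the closed form of $c(x)$) gives $\frac{2}{c(x)\sqrt{1-4x}}=1+\frac{1}{\sqrt{1-4x}}$, and a one-line reduction yields $\frac{x\phi'(x)}{\phi(x)}=\frac{1}{\sqrt{1-4x}}$, establishing $V=\left(\frac{1}{\sqrt{1-4x}},xc(x)^2\right)$.

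For the horizontal half I would invoke the identity $H=V\cdot(1,f)$ proved earlier in the excerpt, obtaining
$$H=\left(\frac{1}{\sqrt{1-4x}},\phi(x)(1+\phi(x))^2\right).$$
The key observation is that $1+\phi(x)=1+xc(x)^2=c(x)$, so $(1+\phi(x))^2=c(x)^2$ and therefore $\phi(x)(1+\phi(x))^2=xc(x)^2\cdot c(x)^2=xc(x)^4$, giving $H=\left(\frac{1}{\sqrt{1-4x}},xc(x)^4\right)$.

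The only place where real work is needed is the simplification of $\frac{x\phi'(x)}{\phi(x)}$; this is not conceptually difficult but requires deft use of the two Catalan identities $c(x)=1+xc(x)^2$ and $1-2xc(x)=\sqrt{1-4x}$. Once this simplification is in hand, both halves follow mechanically from the formulas already established in the paper.
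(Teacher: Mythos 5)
Your proof is correct. Note that the paper itself offers no formal proof of this proposition: it states the result and then justifies it implicitly by identifying the general terms, namely that $(1,x(1+x)^2)$ has entries $\binom{2k}{n-k}$, so that $V$ has entries $\binom{2n}{n-k}$ and $H$ has entries $\binom{2(n+k)}{n-k}$, which are the known coefficient arrays of $\left(\frac{1}{\sqrt{1-4x}},xc(x)^2\right)$ and $\left(\frac{1}{\sqrt{1-4x}},xc(x)^4\right)$. Your argument instead instantiates the paper's general machinery directly: $\phi(x)=\mathrm{Rev}\left(\frac{x}{(1+x)^2}\right)=xc(x)^2$, the hitting-time factor $\frac{x\phi'(x)}{\phi(x)}$ simplified via $c(x)=1+xc(x)^2$ and $1-2xc(x)=\sqrt{1-4x}$, and then $H=V\cdot(1,f)$ with $1+\phi(x)=c(x)$. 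All the identities you use check out (in particular, $\frac{x\phi'}{\phi}=\frac{1}{\sqrt{1-4x}}$ reduces to $2=c(x)\left(1+\sqrt{1-4x}\right)$, which is exactly your identity $\frac{2}{c(x)}=1+\sqrt{1-4x}$), and this is the same computation the paper carries out explicitly for the neighbouring proposition on $(1-x^2,x(1+x)^2)$. The generating-function route and the binomial-coefficient route are both legitimate; yours has the advantage of not presupposing the closed forms of the entries, while the paper's tacit route is shorter if one already knows that $[x^n]\frac{1}{\sqrt{1-4x}}\left(xc(x)^2\right)^k=\binom{2n}{n-k}$.
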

Now the array $(1, x(1+x)^2)$ has general term $\binom{2k}{n-k}$. It begins 
$$\left(
\begin{array}{ccccccc}
 1 & 0 & 0 & 0 & 0 & 0 & 0 \\
 0 & 1 & 0 & 0 & 0 & 0 & 0 \\
 0 & 2 & 1 & 0 & 0 & 0 & 0 \\
 0 & 1 & 4 & 1 & 0 & 0 & 0 \\
 0 & 0 & 6 & 6 & 1 & 0 & 0 \\
 0 & 0 & 4 & 15 & 8 & 1 & 0 \\
 0 & 0 & 1 & 20 & 28 & 10 & 1 \\
\end{array}
\right).$$ 
The general term of $V$ is then $\binom{2n}{n-k}$. This array begins
$$\left(
\begin{array}{ccccccc}
 1 & 0 & 0 & 0 & 0 & 0 & 0 \\
 2 & 1 & 0 & 0 & 0 & 0 & 0 \\
 6 & 4 & 1 & 0 & 0 & 0 & 0 \\
 20 & 15 & 6 & 1 & 0 & 0 & 0 \\
 70 & 56 & 28 & 8 & 1 & 0 & 0 \\
 252 & 210 & 120 & 45 & 10 & 1 & 0 \\
 924 & 792 & 495 & 220 & 66 & 12 & 1 \\
\end{array}
\right).$$ This is \seqnum{A094527}. 
The horizontal half $H$ has general term $\binom{2(n+k)}{n-k}$ and it begins
$$\left(
\begin{array}{ccccccc}
 1 & 0 & 0 & 0 & 0 & 0 & 0 \\
 2 & 1 & 0 & 0 & 0 & 0 & 0 \\
 6 & 6 & 1 & 0 & 0 & 0 & 0 \\
 20 & 28 & 10 & 1 & 0 & 0 & 0 \\
 70 & 120 & 66 & 14 & 1 & 0 & 0 \\
 252 & 495 & 364 & 120 & 18 & 1 & 0 \\
 924 & 2002 & 1820 & 816 & 190 & 22 & 1 \\
\end{array}
\right).$$
An interesting feature of the matrix $V=\binom{2n}{n-k}$ is the following. If we multiply it by the Riordan array $(c(x), x)$ on the left, to get 
$$(c(x), x) \cdot \left(\frac{1}{\sqrt{1-4x}}, xc(x)^2\right)=\left(\frac{c(x)}{\sqrt{1-4x}}, xc(x)^2\right)$$ then the row sums of this matrix have generating function 
$$\frac{\frac{c(x)}{\sqrt{1-4x}}}{1-xc(x)^2}=\frac{1}{1-4x}.$$ 
Thus this new matrix has row sums given by $4^n$. 
$$\left(
\begin{array}{cccccc}
 1 & 0 & 0 & 0 & 0 & 0 \\
 1 & 1 & 0 & 0 & 0 & 0 \\
 2 & 1 & 1 & 0 & 0 & 0 \\
 5 & 2 & 1 & 1 & 0 & 0 \\
 14 & 5 & 2 & 1 & 1 & 0 \\
 42 & 14 & 5 & 2 & 1 & 1 \\
\end{array}
\right)\left(
\begin{array}{cccccc}
 1 & 0 & 0 & 0 & 0 & 0 \\
 2 & 1 & 0 & 0 & 0 & 0 \\
 6 & 4 & 1 & 0 & 0 & 0 \\
 20 & 15 & 6 & 1 & 0 & 0 \\
 70 & 56 & 28 & 8 & 1 & 0 \\
 252 & 210 & 120 & 45 & 10 & 1 \\
\end{array}
\right)=\left(
\begin{array}{cccccc}
 1 & 0 & 0 & 0 & 0 & 0 \\
 3 & 1 & 0 & 0 & 0 & 0 \\
 10 & 5 & 1 & 0 & 0 & 0 \\
 35 & 21 & 7 & 1 & 0 & 0 \\
 126 & 84 & 36 & 9 & 1 & 0 \\
 462 & 330 & 165 & 55 & 11 & 1 \\
\end{array}
\right).$$ We obtain the following.
\begin{proposition} The row sums of the vertical half of the Riordan array $(1+x, x(1+x)^2)$ are given by $4^n$.
\end{proposition}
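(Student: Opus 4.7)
My plan is to use the explicit form of the vertical half $V$ of $(1+x, x(1+x)^2)$ given earlier in the section, namely
$$V = \left(\frac{c(x)^2}{1-xc(x)^2}, xc(x)^2\right),$$
together with the standard row-sum rule for a Riordan array: the row sums of $(g,f)$ have generating function $g(x)/(1-f(x))$. Applying this rule yields the row-sum generating function
$$\frac{c(x)^2/(1-xc(x)^2)}{1-xc(x)^2} = \frac{c(x)^2}{(1-xc(x)^2)^2}.$$

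The next step is to simplify this expression to $\frac{1}{1-4x}$, from which the row sums are read off as $4^n$. The key ingredient is the Catalan identity
$$1 - xc(x)^2 = c(x)\sqrt{1-4x},$$
which follows immediately from the two standard relations $c(x) = 1 + xc(x)^2$ (so $1 - xc(x)^2 = 2 - c(x)$) and $\sqrt{1-4x} = 1 - 2xc(x)$ (which, using $xc(x)^2 = c(x)-1$, rearranges to $c(x)\sqrt{1-4x} = 2 - c(x)$). Squaring both sides of the identity and substituting into the denominator of the row-sum generating function gives
$$\frac{c(x)^2}{(1-xc(x)^2)^2} = \frac{c(x)^2}{c(x)^2(1-4x)} = \frac{1}{1-4x},$$
whose coefficient of $x^n$ is $4^n$, as required.

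There is no real obstacle beyond this algebraic simplification; indeed, the paragraph immediately preceding the proposition essentially performs the same computation when it factors $V$ as $(c(x),x)\cdot\left(\tfrac{1}{\sqrt{1-4x}}, xc(x)^2\right)$ and exhibits $\tfrac{c(x)/\sqrt{1-4x}}{1-xc(x)^2} = \tfrac{1}{1-4x}$. My proposed proof just interprets that identity as the row-sum generating function of $V$, concluding that the row sums are $4^n$.
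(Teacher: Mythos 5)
Your proof is correct and follows essentially the same route as the paper: both start from the identification $V=\left(\frac{c(x)^2}{1-xc(x)^2},xc(x)^2\right)$, apply the row-sum rule $g/(1-f)$, and reduce to $\frac{1}{1-4x}$ via the identity $1-xc(x)^2=c(x)\sqrt{1-4x}$ (which the paper uses implicitly through the factorization $(c(x),x)\cdot\left(\frac{1}{\sqrt{1-4x}},xc(x)^2\right)$). The only difference is cosmetic packaging of where that identity is invoked.
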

We deduce that 
$$\sum_{k=0}^n \sum_{j=0}^n C_{n-j}\binom{2j}{j-k} = 4^n.$$
The related sum $\sum_{k=0}^n \sum_{j=0}^n \binom{2(n-j)}{n-j}\binom{2j}{j-k}$ appears to give \seqnum{A258431}.

\bigskip
\hrule

\noindent 2010 {\it Mathematics Subject Classification}: Primary
15B36; Secondary 11B83, 11C20.
\noindent \emph{Keywords:} Riordan array, Vertical half, Horizontal half, Catalan matrix.

\bigskip
\hrule
\bigskip
\noindent (Concerned with sequences
\seqnum{A000108},
\seqnum{A001316},
\seqnum{A001519},
\seqnum{A001700},
\seqnum{A001906},
\seqnum{A007318},
\seqnum{A033184},
\seqnum{A039598},
\seqnum{A063886},
\seqnum{A078008},
\seqnum{A094527},
\seqnum{A106566},
\seqnum{A112466},
\seqnum{A112467},
\seqnum{A125187},
\seqnum{A125187}, 
\seqnum{A128899}, and
\seqnum{A258431}.)


\begin{thebibliography}{9}

\bibitem{Half} P. Barry, On the halves of a Riordan array and their antecedents, \emph{Linear Algebra Appl.}, \textbf{582} (2019), 114--137.
    
\bibitem{Book} P. Barry, \emph{Riordan Arrays: A Primer}, Logic Press, 2017.

\bibitem{Central} P. Barry, On the $r$-shifted central triangles of a Riordan array, \url{https://arxiv.org/abs/1906.01328}.

\bibitem{MC} D. Merlini, R. Sprugnoli, and M.~C.~Verri, The method of coefficients, \emph{Amer. Math. Monthly}, \textbf{114} (2007),  40--57.

\bibitem{SGWW} L. W. Shapiro, S. Getu, W-J. Woan, and L.C. Woodson,
The Riordan group, \emph{Discr. Appl. Math.}, \textbf{34} (1991),
 229--239.

\bibitem{SL1} N. J. A.~Sloane, \emph{The
On-Line Encyclopedia of Integer Sequences}. Published electronically
at \texttt{http://oeis.org}, 2019.

\bibitem{SL2} N. J. A.~Sloane, The On-Line Encyclopedia of Integer
Sequences, \emph{Notices Amer. Math. Soc.}, \textbf{50} (2003),  912--915.

\bibitem{Yang3} S. L. Yang, Y. N Dong, L. Yang and J. Yin, Half of a Riordan array and restricted lattice paths,
\emph{Linear Algebra Appl.}, \textbf{537} (2018), 1-11.

\bibitem{Yang2} S. L. Yang, Y. X. Xu, and T. X. He, $(m,r)$-Central Riordan arrays and their applications, \emph{Czechoslovak Math. J.} \textbf{67} (2017), 1--18.

\bibitem{Yang1} S. L. Yang, S. N. Zheng, S. P. Yuan, and T. X. He, Schr\"oder matrix as inverse of Delannoy matrix, \emph{Linear Algebra Appl.}, \textbf{439} (2013), 3605--3614.

\bibitem{Zheng} S. N. Zheng and S.L. Yang, On the $r$-shifted central coefficients of Riordan matrices, \emph{J. Applied Math.}, \textbf{2014}, Article ID 848374, 8 pages.

\end{thebibliography}
\end{document}